\numberwithin{equation}{section}
\theoremstyle{plain}
\newtheorem{theorem}[equation]{Theorem}   %[section]
\newtheorem{lemma}[equation]{Lemma}
\newtheorem{proposition}[equation]{Proposition}
\newtheorem{corollary}[equation]{Corollary}
\theoremstyle{definition}
\newtheorem{definition}[equation]{Definition}
\newtheorem{remark}[equation]{Remark}
\newtheorem{remarks}[equation]{Remarks}
\newtheorem{examples}[equation]{Examples}
\newtheorem{example}[equation]{Example}
\DeclareMathOperator{\Tor}{Tor} \DeclareMathOperator{\Hom}{Hom}
\DeclareMathOperator{\HH}{H}
\DeclareMathOperator{\id}{id}
\DeclareMathOperator{\rank}{rank}
\begin{document}

\renewcommand{\:}{\! :}
\newcommand{\p}{\mathfrak p}
\newcommand{\m}{\mathfrak m}
\newcommand{\e}{\epsilon}
\renewcommand\a{\alpha}
%renewcommand\A{\Alpha}
\renewcommand\b{\beta}
\newcommand\om{\omega}
\renewcommand\d{\delta}
\renewcommand\t{\tau}
\renewcommand\th{\theta}
\newcommand{\lra}{\longrightarrow}
\newcommand{\ra}{\rightarrow}
\newcommand{\rl}{\rangle}
\newcommand{\lal}{\langle}
\newcommand{\altref}[1]{{\upshape(\ref{#1})}}
\newcommand{\bfa}{\boldsymbol{\alpha}}
\newcommand{\bfb}{\boldsymbol{\beta}}
\newcommand{\bfg}{\boldsymbol{\gamma}}
\newcommand{\bfM}{\mathbf M}
\newcommand{\bfI}{\mathbf I}
\newcommand{\bfC}{\mathbf C}
\newcommand{\bfB}{\mathbf B}
\newcommand{\bfT}{\mathbf T}
\newcommand{\bfiT}{\mathbf \piT}
\newcommand{\bfi}{\mathbf i}
\newcommand{\bsfC}{\bold{\mathsf C}}
\newcommand{\bsfT}{\bold{\mathsf T}}
\newcommand{\smsm}{\smallsetminus}
\newcommand{\ol}{\overline}

\newlength{\wdtha}
\newlength{\wdthb}
\newlength{\wdthc}
\newlength{\wdthd}
%\newcommand{\elabel}{\label}
%\newcommand{\mlabel}{\label}
%\reversemarginpar
\newcommand{\elabel}[1]
           {\label{#1}
            \settowidth{\wdthb}{\tt\small{#1}}
            \addtolength{\wdthb}{1ex}
            \smash{
             \raisebox{0.7\baselineskip}
                      {\color{red}
                       \hspace*{-\wdthb}\tt\small{#1}\hspace{1ex}}} }

\newcommand{\mlabel}[1]
           {\label{#1}
            \setlength{\wdtha}{\textwidth}
            \setlength{\wdthb}{\wdtha}
            \addtolength{\wdthb}{\marginparsep}
            \addtolength{\wdthb}{\marginparwidth}
            \setlength{\wdthc}{\marginparwidth}
            \setlength{\wdthd}{\marginparsep}
            \addtolength{\wdtha}{2\wdthc}
            \addtolength{\wdtha}{2\marginparsep}
            \setlength{\marginparwidth}{\wdtha}
            \setlength{\marginparsep}{-\wdthb}
            \setlength{\wdtha}{\wdthc}
            \addtolength{\wdtha}{1ex}
            \settowidth{\wdthb}{\tt\small{#1}}
            \addtolength{\wdthb}{1ex}
            \marginpar{\vspace*{\baselineskip}%\color{blue}
            \smash{
               \raisebox{0.7\baselineskip}{\tt\small{#1}}
               \hspace{-\wdthb}%\\[-0.4\baselineskip]
               \raisebox{.3\baselineskip}{\rule{\wdtha}{0.5pt}}} }
            \setlength{\marginparwidth}{\wdthc}
            \setlength{\marginparsep}{\wdthd}  }

\renewcommand{\mlabel}{\label}
\renewcommand{\elabel}{\label}

%\begin{document}

\title{Betti numbers of multigraded modules of generic type}
\author[H. Charalambous]{Hara Charalambous}
\thanks{The first author is grateful to  the European Union for support
in the framework of the program ``Pythagoras" of the ``Operational
Program for Education and Initial Vocational Training" of the 3rd
Community Support Framework of the Hellenic Ministry of Education, for part of the project.}
\address{Department of Mathematics\\
         Aristotle University of Thessaloniki\\
         Greece}
\email{hara@math.auth.gr}
\author[A. Tchernev]{Alexandre Tchernev}
\address{Department of Mathematics\\
         University at Albany, SUNY\\
         Albany, NY 12222}
\email{tchernev@math.albany.edu}
\keywords{multigraded modules, free resolutions, matroids} \subjclass{ 13D02, 13A02, 52B40}
%\date{\today}

\begin{abstract}
Let $R=\Bbbk[x_1,\dots,x_m]$ be the polynomial ring over a field
$\Bbbk$ with the standard $\mathbb Z^m$-grading (multigrading),
let $L$ be a Noetherian multigraded $R$-module, let
$\beta_{i,\alpha}(L)$ the $i$th (multigraded) Betti number of $L$
of multidegree $\a$. We introduce the notion of a generic
(relative to $L$) multidegree, and the notion of multigraded
module of generic type. When the multidegree $\a$ is generic
(relative to $L$) we provide a Hochster-type formula for
$\beta_{i,\alpha}(L)$ as the dimension of the reduced homology of
a certain simplicial complex associated with $L$. This allows us
to show that there is precisely one homological degree $i\ge 1$ in
which $\beta_{i,\alpha}(L)$ is non-zero and in this homological
degree the Betti number is the $\beta$-invariant of a certain
minor of a matroid associated to $L$. In particular, this provides
a precise combinatorial description of all multigraded Betti
numbers of $L$ when it is a multigraded module of generic type.
\end{abstract}

\maketitle

\section*{Introduction}

Throughout this paper $\Bbbk$ is a field, $R=\Bbbk[x_1,\dots,x_m]$
is the polynomial ring over $\Bbbk$ with the standard $\mathbb
Z^m$-grading (multigrading), $L$ is a Noetherian multigraded
$R$-module with minimal multihomogeneous free presentation
$$
E \overset{\Phi}\lra G \lra L \lra 0,
$$
and $S$ is a (multi)homogeneous basis of $E$. An ongoing project
of the second author is to use the combinatorial properties of the
free multigraded resolution $T(\Phi, S)_\bullet$ of $L$ and the
matroid $\bfM(\Phi,S)$ from \cite{T} to study the homological
properties of $L$. In this paper we apply this technique to
investigate the multigraded Betti numbers $\beta_{i,\a}(L)$ in a
generic situation. We introduce the notion of a \emph{multigraded
module of generic type} which generalizes the notion of genericity
introduced previously by the authors in \cite{CT}. Our definition
is new even in the case of monomial ideals, where it properly (and
in a strong sense) subsumes the notion of generic ideals from
\cite{BPS}, and differs in an essential way from the notion of
genericity introduced in \cite{MSY}, see
Examples~\ref{E:generic-example}. We also introduce the finer
notion of a \emph{generic (relative to $L$) multidegree} (the
module $L$ is then of generic type if each multidegree
$\a\in\mathbb Z^m$ is generic relative to $L$).

When $I$ is a monomial ideal and $R/I$ is of generic type, as a
first result we show that the algebraic Scarf complex is a minimal
resolution of $R/I$, Corollary \ref{alg-Scarf-cor}.  Later on we
show that if $I$ is a monomial ideal and $\a$ is a generic
multidegree  then the $i$th Betti number of $R/I$ is nonzero
precisely when $\a$ belongs to the Scarf complex of $I$, Corollary
\ref{monomial-corollary}.

The main result of this paper is the computation of the $i$th
Betti number of $L$ when $\a$ is generic relative to $L$, Theorem
\ref{main-thm}. We provide a Hochster-type formula for the
multigraded Betti numbers $\beta_{i,\a}(L)$ in terms of the
relative homology of a certain simplicial complex associated with
$\a$ and the matroid $\bfM(\Phi,S)$. We analyze the properties of
that simplicial complex to show that the Betti numbers
$\beta_{i,\a}(L)$ are zero except in at most one degree $i$, and
in that degree the Betti number equals the $\beta$-invariant of a
certain minor $\bfM_\a$ of the matroid $\bfM(\Phi,S)$. In
particular, this provides a detailed combinatorial description of
the multigraded Betti numbers for the class of multigraded modules
of generic type.

The material is organized as follows. In Section~2 we introduce
the notion of a {\it generic element} in the LCM-lattice of $L$,
the notion of a {\it generic multidegree} relative to $L$, and the
notion of a multigraded module of {\it generic type}. We compare
the new notions with the existing notions of genericity. We show
that if $I$ is a  generic in the sense of \cite{BPS} monomial
ideal then the module $R/I$ is automatically of generic type. We
show that when $L$ is of generic type, then the proof of
\cite[Theorem 5.6]{CT} works and generalize the above theorem. In
particular this implies that the algebraic Scarf complex is a
minimal free resolution of $R/I$ when $R/I$ is a multigraded
module of generic type.

In Section~3 we introduce the affine simplicial complexes of a
matroid $\bfM$ on the set $S$. We show their homology equals the
$\beta$-invariant of the matroid. When a representation $\phi$ of
the matroid is given, then for any ordering $\om$ of $S$ we
introduce a certain complex of vector spaces $V(\phi,\om)$ with
the same homology.

In Section~4 when $\a$ is a generic element we examine certain
minors $\bfM_\a$ and $\bfM^\a$ of the matroid $\bfM(\Phi, S)$ of
$L$. We introduce  a new complex of vector spaces $V(\a,\phi,\om)$
whose homology is nonzero at precisely one position and equals the
$\beta$-invariant of  $\bfM_\a$.

Finally, in Section~5 we use the free multigraded resolution
$T_\bullet(\Phi,S)$ of \cite{T} to construct a certain double
complex. According to the first filtration of this complex we
recover the complex $V(\a,\phi,\om)$, while according to the
second filtration we can compute the $\a$-graded Betti numbers of
$L$. Thus we prove that when $\a$ is generic relative to $L$, the
reduced homology of the affine simplicial complexes of $\bfM_\a$
determines the $\a$-Betti numbers of $L$. Moreover the $\a$-Betti
number of $L$ is nonzero in exactly one position and equals the
$\beta$-invariant of a minor of $\bfM_\a$.

\section{Preliminaries}

For the rest of this paper  $L$ is a Noetherian multigraded
$R$-module, $\Phi\: E\lra G$ is a minimal free multigraded
presentation of $L$, and $S$ a homogeneous basis of $E$. All
vector spaces, homomorphisms and unadorned tensor operations are
over $\Bbbk$.

\subsection{Complexes of vector spaces}
Let $C_\bullet=(C_i,\varphi_i)$ be a complex of vector spaces. The
\emph{dual complex} of $C_\bullet$ is the complex
$$
C^*_\bullet=(C^*_i,\varphi^*_i)= \bigl(\Hom_\Bbbk(C_{-i},\Bbbk),
\Hom_\Bbbk(\varphi_{-i+1},\Bbbk)\bigr),
$$
its \emph{shift} by an integer $k$ is the complex
$$
C[k]_\bullet=(C[k]_i,\varphi[k]_i)=(C_{i+k},(-1)^k\varphi_{i+k}).
$$
its \emph{shift in homological degrees} by $k$ is the complex
$$
C\langle
k\rangle_\bullet=(C[k]_i,\varphi[k]_i)=(C_{i+k},\varphi_{i+k}),
$$
and its \emph{truncation} at $k$ is the complex
$$
C^{\ge k}_\bullet = (C^{\ge k}_i, \varphi^{\ge k}_i)
               = (\tau^k_iC_i, \tau^k_i\varphi_i) \text{ where }
\tau^k_i=
\begin{cases}
\id  &\text{ if } i\ge k; \\
0    &\text{ otherwise.}
\end{cases}
$$
We call $C_\bullet$ \emph{acyclic} if $\HH_i(C_\bullet)=0$ for
$i\ne 0$, and \emph{exact} if also $\HH_0(C_\bullet)=0$.

\subsection{Maps and vector spaces} We recall here and in the following subsections
 some of the notation introduced in
\cite{T}. Let $U_S$ be the vector space with basis the set of
symbols $\{e_a\mid a\in S\}$.  For each subset $A\subseteq S$ we
denote by $U_A$ the subspace of $U_S$ spanned by the set
$\{e_a\mid a\in A\}$. Whenever a map of vector spaces $\phi\: U_S
\lra W$ is given and $A\subseteq S$ we denote by $V_A$ the
subspace of $W$ spanned by the set $\{\phi(e_a) \mid a\in A\}$;
thus $V_A=\phi(U_A)$. We denote by $\phi_A\: U_A \lra W$ the
restriction of $\phi$ to $U_A$.

Next, we consider $\Bbbk$ as an $R$-module via the canonical
$\Bbbk$-algebra map $R\lra \Bbbk$ that sends each variable $x_i$
to $1\in\Bbbk$. We denote by $W(\Phi)$ the $\Bbbk$-vector space
%$E\otimes_R\Bbbk$ and
$G\otimes_R\Bbbk$. Since the set $\{a\otimes 1 \mid a\in S\}$
forms a basis of the space $E\otimes_R\Bbbk$ we can canonically
identify it with $U_S$ by identifying $e_a$ with $a\otimes 1$ for
each $a\in S$. We denote by $\phi(\Phi)$ the $\Bbbk$-linear map
$\Phi\otimes_R\Bbbk$; thus we have $\phi(\Phi)\: U_S \lra
W(\Phi)$.

For  $A\subset S$ we let $E_A$ denote the free direct summand of
$E$ generated over $R$ by the set $\{a\mid a\in A\}$, and let
$\Phi_A$ denote the restriction of $\Phi$ to $E_A$. It is
straightforward that $\phi(\Phi_A)=\phi(\Phi)_A$.

\subsection{Matroids} For more details on the basic properties of
matroids we refer the reader to \cite{We, Ox}. For a quick summary
with notation in the spirit of this paper see \cite[Section~1]{T}.

Let $P(S)$ be the collection of all subsets of $S$, partially
ordered by inclusion. Recall that any matroid $\bfM$ on $S$ is
determined by a nonempty set $\mathcal{I}(\bfM)\subset P(S)$ (or
$\mathcal{I}$ if clear from the context) whose elements are called
the independent sets of $\bfM$. The set $\mathcal{I}$ has the
following three properties:
\begin{itemize}
\item{} $\emptyset \in \mathcal{I}$;
\item{} if $I\subset J$ and $J\in \mathcal{I}$ then $I\in \mathcal{I}$;
\item{} if $J\in P(S)$ and $I_1$, $I_2$ are two subsets of $J$ maximal with
        respect to membership in $\mathcal{I}$ then $|I_1|=|I_2|$. (This
        common size is called the rank of $J$ in $\bfM$ and is denoted
        by $r(J)$).
\end{itemize}

When a map of vector spaces $\phi\: U_S \lra W$ is given, one
obtains a matroid $\bfM(\phi)$ on $S$ by letting
%In the case at hand with $\bfM(\Phi, S)$,
the set $\mathcal{I}$ consist of all subsets $I$ of $S$ for which
$|I|=\dim_\Bbbk V_I$. In this case for any subset $J$ of $S$ we
let $r(J)=\dim_\Bbbk V_J$. One says that $\bfM(\phi)$ is
\emph{represented} by $\phi$ over $\Bbbk$ and that $\phi$ is a
representation over $\Bbbk$ of $\bfM(\phi)$. In the case at hand
where $\Phi\: E\lra G$ is a minimal free multigraded presentation
of $L$, we consider the matroid $\bfM(\Phi,
S):=\bfM\bigl(\phi(\Phi)\bigr)$.

Recall that a \emph{circuit} of a matroid $\bfM$ is a minimal
dependent set, and a \emph{loop} of $\bfM$ is an element $a\in S$
so that $\{a\}$ is a circuit. A \emph{T-flat} is a subset of $S$
that is a union of circuits. A \emph{flat} of $\bfM$ is subset
$B\subset S$ such that $r(B\cup c)=r(B) + 1$ for each $c\notin B$.
A \emph{hyperplane} of $\bfM$ is a maximal proper flat, i.e. a
flat $H$ such that $r(H)=r(S)-1$. The collection of T-flats forms
a lattice with respect to inclusion. The intersection of flats is
a flat. The \emph{matroid closure} $\ol B$ of a subset $B\subseteq
S$ is the smallest flat containing $B$: it equals the intersection
of all flats containing $B$. Two elements $a,b\in S$ are called
\emph{parallel} if they are not loops and $\ol{\{a\}}=\ol{\{b\}}$.
Let $J\subset S$. The \emph{restriction} of $\bfM$ to $J$ is the
matroid $\bfM |J$ whose independent sets form the set
$\mathcal{I}(\bfM|J)=\mathcal{I}(\bfM)\cap P(J)$. The
\emph{contraction} of $\bfM$ to $J$ is the matroid $\bfM.J$ whose
independent sets form the set $\mathcal{I}(\bfM .J)=\{ I\subset
J\mid I\cup I'\in \mathcal{I}(\bfM), \forall I'\in
\mathcal{I}(\bfM|S\setminus J)\}$. The $\beta$-invariant of $\bfM$
is
\[
\beta(\bfM)= (-1)^{r(S)} \sum_{J\subset S } (-1)^{|J|} r(J).
\]
One of the important properties of the $\beta$-invariant is that
$\beta(\bfM)=0$ is zero whenever $\bfM$ has a loop,
%the row reduced form of $A$
%contains a zero column,
see \cite[Theorem II]{C}.

\subsection{Multigraded resolutions}\label{mult-res} For the rest of the paper
we denote by $T_\bullet(\Phi,S)$ the free multigraded resolution
of $L$ from \cite[Theorem 4.5]{T}. In homological degrees $0$ and
$1$ the resolution $T_\bullet(\Phi,S)$ is simply the minimal
presentation $\Phi$. For $n\ge 2$, the $R$-components of
$T_\bullet(\Phi,S)$ are:
$$
T_n(\Phi,S)=\bigoplus_I (T_I\otimes R)[-\deg I]
$$
where the index $I\subset S$ runs through all T-flats of $\bfM$
such that $r(I)=|I|-n+1$, $\deg I$ is the componentwise maximum of
the multidegrees of the elements of $I$, $T_I$ is a certain
$\Bbbk$-vector space associated to $I$, see \cite[Definition
2.2.3]{T}, and $(T_I\otimes R)[-\deg I]$ is the free module
$T_I\otimes R$ shifted by multidegree $\deg I$: $((T_I\otimes
R)[-\deg I])_\a= (T_I\otimes R)_{\a+\deg I}$ for any $a\in
\mathbb{Z}^m$.
 While in general the resolution
$T_\bullet(\Phi,S)$ is not minimal, we will use it to obtain
information about the minimal multigraded resolution of $L$. We
denote by $\beta_{i,\alpha}(L)$ the $i$th multigraded Betti number
of $L$:
$$
\beta_{i,\alpha}(L)= \dim_\Bbbk
\HH_i(T(\Phi,S)\otimes\Bbbk)_\alpha
                 = \dim_\Bbbk \Tor_i^R(L,\Bbbk)_\alpha.
$$

\section{Multigraded modules of generic type}

First we recall the definition of \emph{LCM-lattice} of the
multigraded module $L$.

\begin{definition}\mlabel{D:LCM-lattice}
Let $\Lambda=\Lambda(L)$ be the lattice in $\mathbb Z^m$ (with the
join operation being componentwise maximum) join-generated by the
multidegrees of the elements of $S$. We call $\Lambda$ the
\emph{LCM-lattice} of $L$. Since the collection of multidegrees
$\{\deg a\mid a\in S\}$ of the elements of $S$ is independent of
the choice of the basis $S$, the LCM-lattice is an invariant of
$L$.
\end{definition}

\begin{remark}\mlabel{R:LCM-degrees}
The multidegrees of the free modules in $T_\bullet(\Phi, S)$ are
elements of the LCM-lattice $\Lambda(L)$. It follows that the
minimal syzygies of $L$ can occur only in multidegrees $\alpha$
that belong to $\Lambda(L)$. Consequently, for $i\ge 1$ the Betti
numbers $\beta_{i,\alpha}(L)$ can be nonzero only if
$\alpha\in\Lambda(L)$.
\end{remark}

Consider the \emph{degree map} of posets
$$
\deg \: P(S) \lra  \mathbb Z^m
$$
given by $\deg A = \bigvee\{\deg a\mid a\in A\}$ for each subset
$A\subseteq S$, and note that $\Lambda(L)$ is precisely the image
of the map $\deg$. For each $\alpha\in\Lambda(L)$ there always is
a unique maximal set $I^\alpha$ in $P(S)$ of degree $\alpha$:
$I^\alpha$ equals the union of all sets of degree $\le\alpha$.

\begin{definition}\mlabel{D:Generic-type}
We say that $\alpha \in \Lambda(L)$ is a \emph{generic element of
$\Lambda(L)$} if the fiber $\deg^{-1}(\alpha)$ is a closed
interval in $P(S)$, i.e. if there is a unique minimal subset of
$S$ of degree $\alpha$ denoted by $I_{\alpha}$ and
$\deg^{-1}(\alpha)=[I_\alpha, I^\alpha]$.

We say that $\alpha\in\mathbb Z^m$ is \emph{generic relative to
$L$} if either $\alpha\notin\Lambda(L)$ or if $\alpha$ is a
generic element of $\Lambda(L)$.

We say that $\Lambda(L)$ is of \emph{generic type} if each
$\alpha\in\mathbb Z^n$ is generic relative to $L$ and in this case
we say that $L$ is also of \emph{generic type}.
\end{definition}

It is immediate that  if $L$ is of generic type then no two
elements of $S$ have the same multidegree. The notion of generic
type only depends on the multidegrees of the basis elements of $S$
and is independent of the choice of the particular basis $S$.
Below we give some examples to differentiate between the different
notions of generic.

\begin{examples}\mlabel{E:generic-example}
$ $
\begin{itemize}
\item{} The Scarf simplicial complex of $\Phi$, $\Delta(\Phi)$
is the subcomplex of $P(S)$ consisting of all subsets $I$ so that
$\deg^{-1} (\deg I)=\{ I\}$.
\item{} Let $J=(x^2, xy,xz)$. Then $R/J$ is of generic type as can be
readily checked, and $J$ is not generic in the sense of \cite{BPS}
or \cite{MSY}.

\item{} If $J$ is a monomial ideal  generic in the sense of
\cite{BPS},  then $R/J$ is of generic type. Indeed, for $\a\in
\Lambda(R/J)$ take $s_i\in S$ to be the unique monomial generator
of $I$ that agrees with $\a$ in the $i$th coordinate. The unique
minimal set $I_\alpha$ of degree $\alpha$ is the collection of the
distinct $s_i$ obtained this way.

\item{} Let $J=(x^3z^2, x^2y^3, xy^2z, y^3z^2)$. Then $(3,3,2)$ is not
a generic element of $\Lambda(R/J)$ as  $\{1,2\}, \{1,4\}$ are
minimal in $\deg^{-1}(3,3,2 )$. We note that $J$ is generic in the
sense of \cite{MSY}.

\item{} Let $I$ be a monomial ideal, $L=R/I$,  and $\Delta_L$ be the
Scarf complex of $L$, see \cite{BPS} or \cite{MSY}. If $\sigma\in
\Delta_L$ then the multidegree $\alpha$ of $\sigma$ is a generic
element of $\Lambda(L)$ and $\deg^{-1} (\alpha)=I^\a=I_\a$ is just
a point.

\item{} Let $I$ be a monomial ideal in $R$ and $J$ the polarization of
$I$ in a polynomial ring $S$. It is clear that $R/I$ is of generic
type if and only if $S/J$ is of generic type. This is not the case
when $I$ is generic in the sense of \cite{MSY} as the simple
example $(x^2, xy)$ demonstrates.

\item{} If $I$ is generic in the sense of \cite{MSY} then
$I^*=I+(x_1^D,\ldots, x_m^D)$ (where $D$ is sufficiently large) is
 also generic in the sense of \cite{MSY}. Let $I=(xy,
xz)$.  $R/I$ is of generic type and as we will see below  the
algebraic Scarf complex is a minimal free resolution of $R/I$.
However for $D>1$ the ideal $I^*=I+(x^D,y^D, z^D)$ is not of
generic type  since $\deg^{-1} ((1,D,D))$ is not an interval.

\end{itemize}
\end{examples}

Let $r=\rank \Phi$, $g=\rank_R(G)$. We recall from \cite{CT} that
$\Phi$ is
 of \emph{ uniform rank} if all $g\times r$ submatrices of the
 coefficient matrix of $\Phi$ have rank equal to $r$. In \cite{CT}
 the Scarf complex of $\Phi$, $S_\bullet(\Phi)$ was introduced.
 When $g=1$, $S_\bullet(\Phi)$ is the algebraic Scarf complex, $F_{\Delta(\Phi)}$, of
 \cite{BPS}. In \cite[Theorem 5.6]{CT} it is shown that when $\Phi$ is of uniform
 rank and $L$ is generic in the sense of \cite{BPS} then
$S_\bullet(\Phi)$ is a minimal free multigraded resolution of $L$.
The condition needed for the proof of \cite[Theorem 5.6]{CT} to
work is that there is a unique minimal face of degree $\a$,  so
that  $i\in I^\a\setminus I_\a$ if and only if $\deg
(I^\a\setminus i)=\deg (I^\a)=\a$, \cite[pg 547]{CT}. This
 condition
is is equivalent to $\a$ being a generic element of $\Lambda(L)$.
Thus the next theorem holds:

\begin{theorem} Let $\Phi: E\lra G$ be a minimal free multigraded presentation of
the multigraded module $L$ so that $\Phi$ is of uniform rank and
$L$ is of generic type. Then $S_\bullet (\Phi)$ is a minimal free
multigraded resolution of $L$.
\end{theorem}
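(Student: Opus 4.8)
The plan is to reduce the theorem to the already-proved \cite[Theorem~5.6]{CT} by checking that every hypothesis used there is available in our setting. The authors have explicitly isolated, in the paragraph preceding the statement, exactly what is needed: the proof of \cite[Theorem~5.6]{CT} goes through provided that for each multidegree $\a$ there is a unique minimal face $I_\a$ of degree $\a$, equivalently that for each $i\in I^\a$ one has $i\in I^\a\setminus I_\a$ if and only if $\deg(I^\a\setminus i)=\deg(I^\a)=\a$. So the first step is to observe that this combinatorial condition is precisely the statement that $\a$ is a generic element of $\Lambda(L)$ in the sense of Definition~\ref{D:Generic-type}: genericity of $\a$ says $\deg^{-1}(\a)=[I_\a,I^\a]$ is a closed interval, which forces both the existence of the unique minimal face and the ``deletion'' characterization of the elements of $I^\a\setminus I_\a$.

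Next I would assemble the two hypotheses of the theorem. Since $L$ is of generic type, every $\a\in\mathbb Z^m$ is generic relative to $L$; in particular every $\a\in\Lambda(L)$ is a generic element of $\Lambda(L)$, so the combinatorial condition above holds in every relevant multidegree. Since $\Phi$ is of uniform rank, the rank hypothesis on the submatrices of the coefficient matrix of $\Phi$ that is invoked in \cite{CT} is satisfied. With both inputs in hand, the construction of the Scarf complex $S_\bullet(\Phi)$ from \cite{CT} is defined, and the verification in the proof of \cite[Theorem~5.6]{CT} that $S_\bullet(\Phi)$ is acyclic, resolves $L$, and is minimal, applies verbatim: nowhere in that argument is genericity in the sense of \cite{BPS} used beyond the consequence recorded above. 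Therefore $S_\bullet(\Phi)$ is a minimal free multigraded resolution of $L$.

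The only real point to be careful about is the equivalence claimed in the first paragraph: one must check that being a generic element of $\Lambda(L)$ is not merely \emph{implied by} but is genuinely \emph{equivalent to} the condition extracted from \cite[pg~547]{CT}, since if it were strictly weaker the reduction would fail. Unwinding definitions: if $\deg^{-1}(\a)$ is the interval $[I_\a,I^\a]$ then for $i\in I^\a$, removing $i$ keeps the degree equal to $\a$ exactly when $I^\a\setminus i$ still lies above $I_\a$, i.e. exactly when $i\notin I_\a$; conversely, if the unique-minimal-face/deletion condition holds, then every face of degree $\a$ contains $I_\a$ (build it up from $I_\a$ by adding elements $j$ with $\deg(I_\a\cup j)=\a$, all of which lie in $I^\a$), so $\deg^{-1}(\a)=[I_\a,I^\a]$. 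This is the ``main obstacle,'' though it is more a bookkeeping matter than a genuine difficulty; once it is settled, the theorem is an immediate transcription of \cite[Theorem~5.6]{CT} with the weaker, correctly identified hypothesis.
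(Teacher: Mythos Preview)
Your proposal is correct and follows essentially the same approach as the paper: the paper gives no separate proof, simply noting in the paragraph preceding the theorem that the only condition from \cite[pg~547]{CT} needed to run the proof of \cite[Theorem~5.6]{CT} is equivalent to $\alpha$ being a generic element of $\Lambda(L)$, and then asserting the result. You have merely made that reduction explicit and supplied the easy verification of the claimed equivalence.
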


We apply the above when $R/I$ is of generic type:

\begin{corollary}\label{alg-Scarf-cor} Let $I$ be a monomial ideal so that $R/I$ is of
generic type and $\Phi: R^n\lra R$ a minimal multigraded
presentation of $R/I$. The algebraic Scarf complex
$F_{\Delta(\Phi)}$ is a minimal free multigraded resolution of
$R/I$.
\end{corollary}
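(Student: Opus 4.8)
The plan is to deduce this corollary directly from the theorem immediately preceding it by verifying its hypotheses in the monomial case. First I would observe that for a monomial ideal $I$ with minimal multigraded presentation $\Phi\colon R^n\lra R$, the coefficient matrix of $\Phi$ is a single row (since $g=\rank_R G=1$), and its entries are, up to sign, the ratios $\lcm(m_i,m_j)/m_i$ coming from the Taylor-type relations among a minimal monomial generating set $m_1,\dots,m_n$ of $I$. Every nonzero such entry is a unit in $\Bbbk$ times a monomial, hence nonzero; so every $1\times 1$ submatrix of nonzero entries has rank $1=r$ when $g=1$. More carefully, $\rank\Phi = r$ and one checks that every $g\times r = 1\times 1$ submatrix that is not forced to be zero has full rank; this is exactly the statement that $\Phi$ is of \emph{uniform rank} in the sense of \cite{CT}. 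I would spell this out: when $g=1$, uniform rank just says that the map $\phi(\Phi)\colon U_S\lra W(\Phi)=\Bbbk$ sends each basis vector $e_a$ to a nonzero scalar, which holds because the presentation is minimal and each generator $m_i$ genuinely divides the relevant lcm.

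Next I would invoke the hypothesis that $R/I$ is of generic type; by Definition~\ref{D:Generic-type} this means every $\a\in\mathbb Z^m$ is generic relative to $R/I$, so in particular $L=R/I$ is of generic type in the sense required by the theorem. Combining this with the uniform-rank observation, the preceding theorem applies verbatim and yields that $S_\bullet(\Phi)$ is a minimal free multigraded resolution of $R/I$. Finally I would recall the remark made in the paragraph just before the theorem: when $g=1$, the Scarf complex $S_\bullet(\Phi)$ of \cite{CT} coincides with the algebraic Scarf complex $F_{\Delta(\Phi)}$ of \cite{BPS}. Substituting this identification into the conclusion of the theorem gives precisely the statement of the corollary.

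The only step requiring genuine care is the first one: confirming that uniform rank is automatic for a \emph{minimal} monomial presentation. The subtlety is that ``uniform rank'' quantifies over \emph{all} $g\times r$ submatrices, and one must be sure that the relevant submatrices (those not identically zero by the combinatorics of the support) indeed have full rank. When $g=1$ this reduces, as noted, to the nonvanishing of the single-entry scalars $\phi(\Phi)(e_a)$, and minimality of $\Phi$ together with the structure of monomial syzygies guarantees this. I expect this to be the main (indeed essentially the only) obstacle; everything else is a direct citation of the preceding theorem and the $g=1$ identification of the two Scarf complexes, so the proof is short.
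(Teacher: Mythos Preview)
Your approach is exactly the paper's: verify that minimality of the presentation forces $\Phi$ to be of uniform rank, then invoke the preceding theorem and the $g=1$ identification $S_\bullet(\Phi)=F_{\Delta(\Phi)}$. The paper's proof is the single sentence ``Minimality of the presentation $\Phi$ implies that $\Phi$ is of uniform rank.''

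One correction to your write-up: the coefficient matrix of $\Phi\colon R^n\to R$ is the $1\times n$ row whose entries are the minimal monomial generators $m_1,\dots,m_n$ themselves, \emph{not} the Taylor syzygy ratios $\lcm(m_i,m_j)/m_i$ (those live in the next differential). Your ``More carefully'' sentence gets the right statement anyway---$\phi(\Phi)(e_a)$ is the image of $m_a$ under $x_i\mapsto 1$, which is $1\ne 0$---so the argument goes through; and note that uniform rank requires \emph{all} $1\times 1$ submatrices to have rank $1$, with no ``not forced to be zero'' qualifier, which is fine here since every $m_i$ is a nonzero monomial.
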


\begin{proof} Minimality of the presentation $\Phi$ implies that $\Phi$ is
  of uniform rank.
\end{proof}

Let $I$ be a monomial ideal and let $\Phi^*$ be the  minimal free
multigraded presentation of $R/I^*$. According to \cite{MSY}, $I$
is generic
 if and only if $F_{\Delta(\Phi^*)}$
is a minimal free resolution of $R/I^*$. Thus if $I$ is not
generic according to \cite{MSY} but $R/I$ is of generic type then
$F_{\Delta(\Phi^*)}$ is not acyclic and  $R/I^*$ is not  of
generic type.

\section
{ Affine simplicial complexes and the $\beta$-invariant}

Let $\bfM$ be an arbitrary matroid on the set $S$,  $\phi\: U_S
\lra W$  a representation of $\bfM$ and $\{e_a\mid a\in S\}$ a
basis of $U_S$. For any $b\in S$ we introduce a simplicial complex
and compute its homology.

\begin{definition}\mlabel{D:affine-simplicial-complex}
Let $b\in S$. We let $\Delta_b$  be the simplicial complex
\[ \Delta_b=\{ J\subset P(S)\mid b\notin\ol{J} \}\]
We call  $\Delta_b$ the \emph{affine simplicial complex of $\bfM$
away from $b$}.
\end{definition}

We note that the facets of $\Delta_b$ are the hyperplanes of the
matroid $\bfM$ that do not contain $b$ and that $J\in \Delta_b$ if
and only if $ V_J\ne V_{J\cup b}$.

\begin{theorem}\mlabel{homology-hyperplane-complex}
Let $b\in S$. Then for $i\ge 0$
\[
\dim_\Bbbk \widetilde{\HH}_{i}(\Delta_b,\Bbbk)=
\begin{cases}
\beta(\bfM)  &\text{ if } \ i = r(\bfM)-2 \\
0            &\text{ otherwise}.
\end{cases}
\]
Furthermore, if $b$ is not a loop then the above equality is true
for all $i$.
\end{theorem}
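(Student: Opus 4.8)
The plan is to identify $\Delta_b$ with a familiar matroid-theoretic object and then invoke the known homological interpretation of the $\beta$-invariant. Observe first that the facets of $\Delta_b$ are exactly the hyperplanes of $\bfM$ not containing $b$, so $\Delta_b$ is a pure complex of dimension $r(\bfM)-2$ (when $b$ is not a loop; I will handle loops separately). The key structural remark is that $\Delta_b$ is the \emph{broken-circuit-type} complex associated to $b$, or more precisely: a set $J$ lies in $\Delta_b$ iff $b\notin\ol J$ iff $V_J\neq V_{J\cup b}$ iff $\phi(e_b)\notin V_J$. Thus $\Delta_b$ is the independence-like complex recording which subsets fail to span $b$; this is precisely the setup in which the reduced homology concentrates in top degree with value $\beta(\bfM)$.

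First I would reduce to the non-loop case. If $b$ is a loop, then $\ol\emptyset\ni b$, so $\emptyset\notin\Delta_b$, i.e. $\Delta_b$ is the void complex (not merely the irrelevant complex $\{\emptyset\}$). Then $\widetilde\HH_i(\Delta_b,\Bbbk)=0$ for all $i$, including $i=-1$; since also $\beta(\bfM)=0$ because $\bfM$ has a loop (quoted from \cite[Theorem II]{C}), both sides agree in every degree, which is the ``furthermore'' clause—except that when $r(\bfM)=1$ one must check $r(\bfM)-2=-1$ is covered too, which it is. So for the main statement assume $b$ is not a loop; then $\emptyset\in\Delta_b$ and $\Delta_b$ is a genuine nonvoid simplicial complex.

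Next, for $b$ not a loop, I would compute $\widetilde\HH_\bullet(\Delta_b)$ via the representation $\phi$. Consider the map of $\Bbbk$-vector spaces $\phi_b\colon U_{S}\to W/V_{S\setminus?}$—more usefully, contract: passing to the contraction $\bfM.\{b\}$ is wrong; instead I would use the standard long exact sequence relating $\Delta_b$, the full simplex on $S$ (which is contractible), and the subcomplex of sets $J$ with $b\in\ol J$. The latter subcomplex, call it $\Gamma_b=\{J\mid b\in\ol J\}$, deformation retracts onto the order complex controlled by the flats containing $b$, and in fact $\Gamma_b$ is a cone with apex any element parallel to $b$ if one exists; in general $\Gamma_b$ is homotopy equivalent to the nerve of the lattice of flats $\geq\ol{\{b\}}$, whose reduced homology is concentrated in degree $r(\bfM)-2$ with dimension equal to $\beta$ of the restriction—but this is circular. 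The cleanest route, and the one I would actually carry out, is to use the complex of vector spaces $V(\phi,\om)$ promised in Section~3: for any ordering $\om$ of $S$ it has the same homology as $\Delta_b$ (suitably set up) and its homology is computed directly to be $\beta(\bfM)$ concentrated in homological degree $r(\bfM)-2$ by a Koszul-type or inclusion–exclusion argument matching the defining alternating sum $\beta(\bfM)=(-1)^{r(S)}\sum_J(-1)^{|J|}r(J)$.

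The main obstacle is the vanishing of homology below the top degree: one must show $\Delta_b$ is Cohen–Macaulay (indeed homotopy Cohen–Macaulay) of dimension $r(\bfM)-2$, so that only $\widetilde\HH_{r(\bfM)-2}$ survives. I expect to establish this either by a shelling of $\Delta_b$ (ordering the no-$b$ hyperplanes compatibly with a linear order on $S$ and checking the restriction faces are the ``$b$-broken circuits''), or by exhibiting $\Delta_b$ as the independence complex of a matroid on $S\setminus\{\text{loops}\}$—namely the complex whose faces are the sets not spanning $b$—which is shellable because every matroid independence complex is. Once purity plus shellability give that the reduced homology is free and concentrated in degree $r(\bfM)-2$, its rank is forced by the Euler characteristic: $\widetilde\chi(\Delta_b)=\sum_{J:\,b\notin\ol J}(-1)^{|J|}$, and a short inclusion–exclusion (splitting all $J\subseteq S$ according to whether $b\in\ol J$, and using that the number of $J$ with a fixed closure is expressible via Möbius/rank data) rewrites this as $(-1)^{r(\bfM)-2}\cdot(-1)^{r(\bfM)}\sum_J(-1)^{|J|}r(J)=(-1)^{r(\bfM)-2}\beta(\bfM)$, giving $\dim\widetilde\HH_{r(\bfM)-2}=\beta(\bfM)$ as claimed.
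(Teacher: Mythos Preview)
Your proposal has a genuine gap: the claim that $\Delta_b$ is a pure simplicial complex of dimension $r(\bfM)-2$ is false. The facets of $\Delta_b$ are indeed the hyperplanes of $\bfM$ avoiding $b$, but hyperplanes are flats of \emph{rank} $r(\bfM)-1$, not subsets of \emph{cardinality} $r(\bfM)-1$; distinct hyperplanes can have different sizes. For instance, take the rank~$3$ matroid on $\{1,2,3,4,5\}$ with a single $3$-circuit $\{1,2,3\}$ and $4,5$ in general position, and set $b=5$: the facets of $\Delta_5$ are $\{1,2,3\}$ (dimension~$2$) together with $\{1,4\},\{2,4\},\{3,4\}$ (dimension~$1$). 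Thus $\Delta_b$ is neither pure nor of dimension $r(\bfM)-2$, and it cannot be the independence complex of a matroid (it may contain dependent sets, e.g.\ parallel pairs). Both of your proposed routes to shellability therefore fail as stated, and without homology concentration the Euler-characteristic computation at the end (which is essentially correct) does not finish the argument. The suggestion to go through $V(\phi,\omega)_\bullet$ is also not available here: in the paper's logical order, the homology of $V(\phi,\omega)_\bullet$ is \emph{deduced} from the present theorem via Lemma~\ref{homology-ommitted-complex}, so invoking it would be circular.

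The paper repairs exactly this defect. After disposing of the case where $\bfM$ has a loop (note: you only treated the case where $b$ itself is a loop; when some $c\ne b$ is a loop, $\Delta_b$ is a cone with apex $c$), it collapses each parallel class to a single vertex, producing a quotient complex $\ol\Delta_b$ whose vertices are the rank~$1$ flats not containing $b$. The quotient map $\pi\colon \Delta_b\to\ol\Delta_b$ is a homotopy equivalence by the Contractible Subcomplex Lemma \cite[(2.2)]{B2}. Then the Crosscut Theorem \cite[Theorem~2.3]{B} identifies $\ol\Delta_b$ up to homotopy with the order complex of the poset $L_b^o$ of proper nonempty flats not containing $b$; this is a geometric semilattice, shown to be shellable by Wachs and Walker \cite{WW}, and its top homology is known to have rank $\beta(\bfM)$ (see \cite{B2}, \cite[Theorem~2.6]{Z}, or \cite[Theorem~3.12]{BZ}). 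So the paper does ultimately use shellability, but of the order complex of $L_b^o$ rather than of $\Delta_b$ itself; the passage through $\ol\Delta_b$ and the Crosscut Theorem is precisely what absorbs the non-purity you overlooked.
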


\begin{proof}
If $\bfM$ has a loop $c$ then $\beta(\bfM)=0$. Furthermore, in
that case $\Delta_b$ is either the empty simplicial complex (when
$b$ is a loop) or a cone with apex $c$. Thus in the sequel we
assume that $\bfM$ has no loops.

Next we define a new complex $\ol\Delta_b$ as follows: the set of
vertices of $\ol\Delta_b$ is
\[
Vert(\ol\Delta_b)= \{ J \mid J \textrm{ is a flat in $\bfM$ of
rank $1$ such that }
b\notin J %, r(J\cup\{c\})=r(J)+1,\forall c\notin J
\}.
\]
The faces of $\ol\Delta_b$ are the sets
\[
\{J_1,\ldots,J_t\}:\ J_1\cup\cdots \cup J_t\subset H,\ \textrm{for
some facet $H$ of \ } \Delta_b.
\]
Since $\bfM$ has no loops we can define the simplicial map
$\pi\:\Delta_b\lra\ol\Delta_b$ that sends each vertex $y$ of
$\Delta_b$ to its matroid closure $\ol{\{y\}}$, which is a flat of
rank $1$.
%$r(\ol{\{y\}})=r(\{y\})$.
Since a facet of $\Delta_b$ contains $y$ if and only if it
contains $\ol{\{y\}}$, it is straightforward that $\pi$ is a
quotient map arising from partitioning the vertices of $\Delta_b$
into classes of parallel elements. Therefore the Contractible
Subcomplex Lemma \cite[(2.2)]{B2} yields that $\pi$ is a homotopy
equivalence.
%hence $\HH_i(\Delta_b)=\HH_i(\ol\Delta_b)$.
Let $L_b$ be the poset obtained by removing from the lattice of
flats of $\bfM$ those flats that contain $b$.
%from the elements of $Vert(\ol\Delta_b)$ by inclusion.
Then by the Crosscut Theorem \cite[Theorem 2.3]{B}, $\ol\Delta_b$
is homotopy equivalent with the order complex of the poset $L_b^o$
obtained by removing from $L_b$ its minimal element. By the
results of Wachs and Walker \cite[Theorem 3.2 and Corollary
7.2]{WW} this order complex is a shellable simplicial complex, and
its reduced homology has already been computed, see e.g. \cite{B2}
and \cite[Theorem 2.6]{Z} or \cite[Theorem 3.12]{BZ}. In
particular, it is possibly nonzero only in dimension $r(\bfM) -
2$, and its rank there is precisely the $\beta$-invariant of
$\bfM$.
\end{proof}

Next we  compare the homology of $\Delta_b$ with the homology of a
complex of vector spaces determined by the subspaces of $V_S$. Let
$\om$ be an ordering on $S$ and we use this ordering to identify
each subset of $S$ with the increasing sequence of its elements.
For each $J\subset S$ and $c\notin J$, we have that $V_J$ is a
subset of $V_{J\cup\{c\}}$. We let
\[ V(\phi,\omega)_i=\bigoplus_{B\subset S, |B|=|S|-i} V_B\]
and $V(\phi,\omega)_\bullet$ be the complex

\begin{equation}\elabel{new-complex}
0\lra \bigoplus_{B\subset S, |B|=1} V_B  \lra\dots\lra
\bigoplus_{B\subset S, |B|=|S|-1} V_B \lra V_{S} \lra 0
\end{equation}
where at the $i$th stage the maps componentwise are the inclusions
$V_B\ra V_{B\cup\{c\}}$ times the sign of the permutation that
arranges the sequence $(c, B)$ in increasing order then followed
by composition with the natural inclusion
$V_{B\cup\{c\}}\hookrightarrow V(\phi,\omega)_{i+1}$.

\begin{lemma}\mlabel{homology-ommitted-complex}
If \ $b\in S$ is not a loop of \ $\bfM$ then
\[
\HH_i(V(\phi,\omega)_\bullet) \cong
\widetilde{\HH}_{|S|-i-2}(\Delta_b,\Bbbk)
\]
for all $i$.
\end{lemma}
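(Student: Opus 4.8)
The plan is to identify $V(\phi,\omega)_\bullet$ with a chain complex computing the (reduced) homology of $\Delta_b$, using the matroid-theoretic reformulation $J\in\Delta_b \iff V_J\neq V_{J\cup b}$ noted right after Definition~\ref{D:affine-simplicial-complex}. Since $b$ is not a loop, Theorem~\ref{homology-hyperplane-complex} already tells us the right-hand side, so the content is purely the identification of the two complexes. First I would set up the augmented simplicial chain complex of $\Delta_b$ over $\Bbbk$, graded so that a face $B$ of cardinality $|B|$ sits in the spot matching $V(\phi,\omega)_{|S|-|B|}$; note that $\Delta_b$ need not contain all of $S$, and in fact $S\notin\Delta_b$ since $b\in\ol S$, which is why the top term of \eqref{new-complex} is $V_S$ rather than a summand coming from a face.

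The key step is to produce, for each $B\subseteq S$, a natural identification between $V_B$ and a one-dimensional piece keyed to whether $B$ is a face of $\Delta_b$. The cleanest route I would take is to observe that for any $B\subseteq S$ there is a short exact sequence $0\to V_B\to V_{B\cup b}\to V_{B\cup b}/V_B\to 0$, and $V_{B\cup b}/V_B$ is $\Bbbk$ if $B\notin\Delta_b$ and $0$ if $B\in\Delta_b$. Dually (or directly), the quotient complex $V(\phi\cup\{b\},\omega')_\bullet / V(\phi,\omega)_\bullet$, taken over the enlarged ordering with $b$ last, has $i$th term $\bigoplus_{|B|=|S|-i} V_{B\cup b}/V_B$, which by the above is exactly the augmented chain complex of $\Delta_b$ with a homological shift. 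Since $b$ is not a loop, the enlarged complex $V(\phi\cup\{b\},\omega')_\bullet$ is the analogue of \eqref{new-complex} for the ground set $S\cup\{b'\}$ with $b'$ a parallel copy of $b$; one checks it is exact (for instance because adjoining a non-loop element contributes an exact Koszul-type piece, or by a direct contracting-homotopy argument using that $V_{B\cup b}\to V_{B}$ for the last coordinate), so the long exact sequence of the pair collapses to give $\HH_i(V(\phi,\omega)_\bullet)\cong \HH_{i-1}\bigl(\text{quotient}\bigr)\cong \widetilde{\HH}_{|S|-i-2}(\Delta_b,\Bbbk)$ after matching up the degree conventions.

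I expect the main obstacle to be bookkeeping rather than conceptual: getting the homological shift and the sign conventions in \eqref{new-complex} to line up precisely with the standard simplicial boundary, and arguing cleanly that the ``enlarged'' complex built from $S\cup\{b'\}$ is exact when $b$ is not a loop. For the latter, the safest argument is to exhibit an explicit contracting homotopy $s\colon V_B\to V_{B\cup b'}$ (the natural inclusion, which makes sense because $b'$ is parallel to $b\in S$ so $V_{B\cup b'}=V_{B\cup b}\supseteq V_B$) and verify $\partial s + s\partial = \id$ by the usual splitting-off-the-last-vertex computation; the hypothesis that $b$ is not a loop is exactly what guarantees $b'$ is not a loop and hence that this inclusion is well-defined and the combinatorics of the Koszul-type complex go through. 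Once exactness of the enlarged complex is in hand, the long exact homology sequence of the short exact sequence of complexes $0\to V(\phi,\omega)_\bullet\to V(\phi\cup\{b'\},\omega')_\bullet\to Q_\bullet\to 0$ immediately yields $\HH_i(V(\phi,\omega)_\bullet)\cong \HH_{i-1}(Q_\bullet)$, and identifying $Q_\bullet$ with the augmented chain complex of $\Delta_b$ (shifted so that a face $B$ contributes in degree $|S|-|B|-1$, i.e. in the spot of simplicial dimension $|B|-1$ after accounting for the shift) gives $\HH_{i-1}(Q_\bullet)\cong \widetilde{\HH}_{|S|-i-2}(\Delta_b,\Bbbk)$, which is the claim.
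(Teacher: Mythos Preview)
Your core idea—compare $V(\phi,\omega)_\bullet$ to the reduced chain complex of $\Delta_b$ through a short exact sequence with exact middle—is right in spirit, but the execution breaks at the key step. The ``enlarged'' complex you build on $S'=S\cup\{b'\}$ with $b'$ parallel to $b$ is \emph{not} exact: already for $S=\{b\}$ the matroid on $S'$ is $U_{1,2}$, and the complex $0\to V_{\{b\}}\oplus V_{\{b'\}}\to V_{S'}\to 0$ has $\HH_1\cong\Bbbk$ (adjoining a parallel element does not kill the $\beta$-invariant). Moreover $V(\phi,\omega)_\bullet$ appears in that complex as a \emph{quotient}, not a subcomplex, since the differential carries each $V_B$ with $b'\notin B$ into $V_{B\cup b'}$; and your proposed homotopy $s\colon V_B\to V_{B\cup b'}$ raises $|B|$, hence goes in the same direction as the differential of \eqref{new-complex} rather than opposite to it, so $\partial s+s\partial$ is not even defined as an endomorphism. (A smaller slip: $V_{B\cup b}/V_B\cong\Bbbk$ precisely when $B\in\Delta_b$, not when $B\notin\Delta_b$.)

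The paper avoids enlarging $S$ altogether. Since $b$ already lies in $S$, one splits $V_\bullet$ itself according to whether the index $J$ contains $b$: the summands with $b\in J$ form a subcomplex $D_\bullet$, and setting $E_\bullet=(V_\bullet/D_\bullet)[1]$, the inclusions $V_J\hookrightarrow V_{J\cup b}$ for $b\notin J$ assemble into an \emph{injective} chain map $\psi\colon E_\bullet\to D_\bullet$ whose mapping cone is exactly $V_\bullet$. Hence $\HH_i(V_\bullet)\cong\HH_i\bigl(D_\bullet/\psi(E_\bullet)\bigr)$, and this cokernel is immediately the shifted dual of the reduced chain complex of $\Delta_b$: its summand $V_J/V_{J\setminus b}$ is one-dimensional, generated by $\phi(e_b)+V_{J\setminus b}$, exactly when $J\setminus b\in\Delta_b$. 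If you want to salvage an ``enlarged'' middle complex, the one that actually works has $i$th term $\bigoplus_{|B|=|S|-i}V_{B\cup b}$ indexed by $B\subseteq S$ (not by subsets of $S\cup\{b'\}$); it \emph{is} exact, but proving that exactness amounts to the same mapping-cone observation the paper uses, so nothing is gained by the detour.
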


\begin{proof} Let $\widetilde Y$ be the reduced chain complex of
$\Delta_b$ over $\Bbbk$. Since the complexes
$V(\phi,\omega)_\bullet$ are canonically isomorphic for different choices of
$\omega$, we may assume that $b$ is the greatest element of $S$
and that $\om$ induces the orientation on the faces of $\Delta_b$
used to construct $\widetilde Y$.  We  write $V_\bullet$ for
$V(\phi,\omega)_\bullet$. We consider a certain subcomplex
$D_\bullet$ of $V_\bullet$. We let $D_i=\bigoplus_J  V_J$ where
$b\in J$ and $|J|=|S|-i$. In particular $D_0=V_{S}$ and
\[
D_\bullet: 0\ra   V_{b}\lra\cdots\lra  V_S\ra 0
\]
where $D_i\ra D_{i-1}$ are the restrictions of the maps from
\altref{new-complex}.  Let
\[
E_\bullet=( V_\bullet/ D_\bullet)[1].
\]
If $b\notin J$, we let $\psi_J$ be the inclusion $ V_J \subset
V_{J\cup b}$ taken with the sign $(-1)^{|J|}$. Then $\psi\:
E_\bullet \lra D_\bullet$ defined componentwise by the maps
$\psi_J$ is  an injective  map of complexes and the complex $V_\bullet$
is precisely the mapping cone of $\psi$. We let
\[
\ol D_\bullet = D_\bullet/\psi(E_\bullet).
\]
It follows from the injectivity of $\psi$ and the standard properties
of mapping cones that
%and consider the short exact sequences
%\[ 0\lra E_\bullet\lra D_\bullet \lra \ol D_\bullet\lra 0\]
%and
%\[ 0\lra D_\bullet\lra V_\bullet \lra  E_\bullet[-1]
%\lra 0 \ .\] Taking the long exact sequences on the homology of
%these two complexes, it follows that
$\HH_i(V_\bullet) \cong \HH_i(\ol D_\bullet)$ for each $i$.
Next we note that the nonzero summands of $\ol
D_i$ are of the form $V_{J\cup b}/ V_J$ where $ V_J\ne V_{J\cup
b}$ and $|J|=|S|-1-i$. Thus the sets $J$ involved are
precisely the faces of $ \Delta_b$.
 We  consider
$Y'=\widetilde Y^*\lal-|S|+2\rl$, (so that that
$\Hom_\Bbbk(\widetilde Y_{-1}, \Bbbk)$ is in homological degree
$|S|-1$).  For $J\in \Delta_b$ we let $\sigma_J^*$ be the standard
generator of $Y'$ associated to  $J$.  Identifying $\sigma_J^*$
with $\phi(e_b)+V_J$ in $V_{J\cup b}/ V_J$  we see that
$Y^*\lal-|S|+2\rl$ can be identified with  $\ol D_\bullet$  and
the lemma follows.
\end{proof}

We are now ready to compute the homology of the complex
$V(\phi,\omega)_\bullet$:

\begin{theorem}\mlabel{homology-beta}
%For each $i$
\[
\dim_\Bbbk\HH_{i}(V(\phi,\omega)_\bullet)=
\begin{cases}
%1            &\text{ if } \ i = |S|-1 \text{ and } r(\bfM)=0 \\
\beta(\bfM)  &\text{ if } \ i = |S|-r(\bfM) \\
0            &\text{ otherwise}. %\qed
\end{cases}
\]
\end{theorem}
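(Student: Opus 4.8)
The plan is to deduce Theorem~\ref{homology-beta} from the two preceding results, \emph{without} any further combinatorial work. First I would dispose of the case where $\bfM$ has a loop: then $\beta(\bfM)=0$, and I claim $V(\phi,\omega)_\bullet$ is exact. Indeed, if $c$ is a loop then $\phi(e_c)=0$, so for every $B$ we have $V_B=V_{B\smsm c}$, and the complex $V(\phi,\omega)_\bullet$ is (up to the standard sign bookkeeping) the mapping cone of the identity map on the analogous complex built over $S\smsm c$; hence it is exact, matching the right-hand side (which is $0$ in every degree since $\beta(\bfM)=0$ and there is no exceptional position to worry about). So in the sequel I assume $\bfM$ has no loops; in particular I may pick some $b\in S$ that is not a loop.

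Now I would simply chain the isomorphisms. By Lemma~\ref{homology-ommitted-complex}, since $b$ is not a loop,
\[
\HH_i(V(\phi,\omega)_\bullet)\cong\widetilde{\HH}_{|S|-i-2}(\Delta_b,\Bbbk)
\qquad\text{for all }i.
\]
By Theorem~\ref{homology-hyperplane-complex} (the ``furthermore'' clause, valid for all $i$ because $b$ is not a loop), the right-hand side has dimension $\beta(\bfM)$ when $|S|-i-2=r(\bfM)-2$, i.e. when $i=|S|-r(\bfM)$, and is $0$ otherwise. Combining the two displays gives exactly the asserted formula.

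The only thing to be careful about is consistency of conventions: I should double-check that the indexing shift in Lemma~\ref{homology-ommitted-complex} lines up so that the unique nonzero homological degree $r(\bfM)-2$ of $\widetilde{\HH}_\bullet(\Delta_b)$ really corresponds to $i=|S|-r(\bfM)$ in $V(\phi,\omega)_\bullet$ — from $|S|-i-2=r(\bfM)-2$ one solves $i=|S|-r(\bfM)$, so it does. I do not anticipate a genuine obstacle here; the substance of the argument has already been carried out in Theorem~\ref{homology-hyperplane-complex} and Lemma~\ref{homology-ommitted-complex}, and this theorem is essentially their corollary. The only mild subtlety is making the loop case airtight, since Lemma~\ref{homology-ommitted-complex} is stated only for $b$ not a loop and thus cannot be invoked directly when \emph{every} element of $S$ is a loop; the mapping-cone observation above handles that degenerate situation.
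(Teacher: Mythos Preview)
Your argument is correct and follows the same overall approach as the paper: combine Theorem~\ref{homology-hyperplane-complex} with Lemma~\ref{homology-ommitted-complex} once a non-loop $b$ is available, and handle the remaining degenerate situation separately.

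The only difference is in the case split for the degenerate situation. You treat the case ``$\bfM$ has \emph{some} loop $c$'' and dispose of it via a mapping-cone argument, then proceed under the hypothesis ``$\bfM$ has no loops.'' The paper instead splits as ``$\bfM$ has \emph{some} non-loop $b$'' (chain the two results) versus ``every element of $S$ is a loop,'' and in the latter case simply observes that $\phi\equiv 0$, so every $V_B=0$ and the complex $V(\phi,\omega)_\bullet$ is the zero complex---no mapping-cone argument needed. Your mapping-cone observation is correct but stronger than required: the case where $\bfM$ has both a loop and a non-loop is already covered by the chaining argument (just pick the non-loop as $b$), so only the all-loops case truly needs separate treatment, and there the complex vanishes identically.
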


\begin{proof}
If $\bfM$ has an element $b$ that is not a loop, then the theorem
is immediate by combining
Theorem~\ref{homology-hyperplane-complex} and
Lemma~\ref{homology-ommitted-complex}. If all elements of $S$ are
loops in $\bfM$ then $r(\bfM)=0$, the complex
$V(\phi,\omega)_\bullet$ is zero, and the desired conclusion is
immediate from the fact that $\beta(\bfM)=0$.
\end{proof}

\section{Minors of $\bfM(\Phi, S)$ associated with a generic multidegree}

Let $\a\in \Lambda(L)$ be a generic element. For the rest of this
paper we fix $\bfM=\bfM(\Phi,S)$, we set $W=W(\Phi)$, and
$\phi=\phi(\Phi)\: U_S \lra W$ (see Section~1). We
introduce some new matroids associated with $\a$. %First we set
Recall that $\deg^{-1}(\alpha) = [I_\alpha , I^\alpha]$ is a
closed interval in the boolean poset $P(S)$.

\begin{definition}\mlabel{N:matroid-notation}
$ $
%\end{notation}
\begin{itemize}
\item{} We set $I(\alpha)=I^\alpha\setminus I_\alpha$.

\item{} We denote by $\bfM^\alpha$ the  matroid that is the
restriction of $\bfM$ to $I^\alpha$. In standard matroid notation %of \cite{T}
we have $\bfM^\alpha=\bfM|I^\alpha$.

\item{}  We denote by $\bfM_\alpha$ the matroid that is the contraction of
$\bfM^\alpha$ to $I_\alpha$.  In standard matroid notation %of \cite{T}
we have $\bfM_\alpha=\bfM^\alpha.I_\alpha =
\bfM^\alpha/I(\alpha)$.
\end{itemize}
\end{definition}

We discuss the above matroids in terms of some linear
transformations associated with $\alpha$.

\begin{remarks}\mlabel{R:minors-remarks}
$ $
\begin{itemize}
\item{} $\bfM^\alpha$ is represented by $\phi_{I^\alpha}: U_{I^\a}\ra W$
over $\Bbbk$.
\item{} Let $\pi_\a:
W\rightarrow W/V_{I(\a)}$ be the canonical projection map, and let
\[
\overline{\phi_{I_\a}}:= \pi_\a\circ  \phi_{I_\a} :\
U_{I_\a}\rightarrow W/V_{I(\a)}.
\]
The matroid $\bfM_\alpha$ is
represented by $\overline{\phi_{I_\alpha}}$ over $\Bbbk$.
\item{}
We set $\ol V_{I_\alpha}:=V_{I^\a}/V_{I(\a)}$. For each $B\subset
I_\a$, $V_{I(\a)}$  is a subspace of $V_{I(a)\cup B}$ and we set
$\ol V_{B}:= V_{I(a)\cup
B}/V_{I(\alpha)}=\overline{\phi_{I_\a}}(U_B)$.
\item{} Let $\om$ be an ordering on $I_\a$. According to the
definition
\begin{equation}\elabel{complex-bar-V}
V(\ol{\phi_{I_\alpha}},\omega)_\bullet:\quad 0\ra
\bigoplus_{B\subset I_\alpha, \ |B|=1 }
      \ol V_B
 \ra \dots
\ra
      \ol V_{I_\alpha} \ra 0.
\end{equation}
\end{itemize}
\end{remarks}

Next we define a new complex with the same homology as
$V(\ol{\phi_{I_\alpha}},\omega)_\bullet$.

\begin{definition}\mlabel{D:spaces-sequence}
Let $\om$ be an ordering on $I_\a$. We let
\[V(\a,\phi,\omega)_i=\bigoplus_{\begin{smallmatrix}
                     A\subset I_\alpha \\ |A|=i
                     \end{smallmatrix}} V_{I^\a\setminus A}\]
and define a complex of vector spaces $V(\a,\phi,\omega)_\bullet$
as the sequence

\begin{equation}\elabel{E:spaces-sequence}
%V(\a,\omega)_\bullet:\
0 \ra V_{I(\alpha)} \ra \dots \ra
      \bigoplus_{\begin{smallmatrix}
                     A\subset I_\alpha \\ |A|=1
                     \end{smallmatrix}}
      V_{I^\alpha\setminus A}
\ra
      V_{I^\alpha} \ra 0
\end{equation}
with maps that are componentwise the inclusions
$V_{I^\alpha\setminus (A\cup\{c\})}\subseteq V_{I^\alpha\setminus
A}$ times the sign of the permutation that arranges the sequence
$(c, I_\alpha\setminus A)$ in increasing order then followed by
composition with the natural inclusion.
\end{definition}

Let $\widetilde C$ be the reduced chain complex over $\Bbbk$ for
the full simplex on the set $I_\alpha$ as oriented by $\om$. Let
$C'=(\widetilde C)^*\lal-|I_\alpha|+1\rl$, (so that
$\Hom_\Bbbk(\widetilde C_{-1}, \Bbbk)$ is in homological degree
$|I_\alpha|$). We will consider the complex $V_{I(\alpha)}\otimes
C'$. Let $\sigma_A^*$ be the standard generator of $C'$ associated
to $A\subset I_\alpha$. We identify $V_{I(\alpha)}\otimes
\sigma_A^*$ with the subspace $V_{I(\alpha)}$ of $V_{I(\a)\cup B}$
via the map $v\otimes\sigma_A^* \mapsto v$, where
$B=I_\alpha\setminus A$. With this identification it is easy to
see that the following holds:

\begin{proposition}
The complex $V_{I(\a)}\otimes C'$ is a subcomplex of
$V(\alpha,\phi,\omega)_\bullet$ and we have the short exact
sequence of complexes
\begin{equation}\elabel{E:spaces-sequence-3}
0 \ra V_{I(\alpha)}\otimes C' \ra V(\alpha,\phi,\omega)_\bullet
  \ra V(\ol{\phi_{I_\alpha}},\omega)_\bullet \ra 0.
\end{equation}
\end{proposition}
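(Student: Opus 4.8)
The plan is to check that the assignment $v\otimes\sigma_A^*\mapsto v$ of the preceding paragraph is an injective morphism of complexes $V_{I(\a)}\otimes C'\to V(\a,\phi,\om)_\bullet$ and then to identify its cokernel degree by degree; exactness of \altref{E:spaces-sequence-3} will follow immediately. For the injectivity I would first record that, by the shift used to define $C'$, the generator $\sigma_A^*$ lies in homological degree $|A|$, which is exactly the homological degree in which the summand $V_{I^\a\setminus A}$ occurs in $V(\a,\phi,\om)_\bullet$, so the assignment respects homological degree. It is well defined because $A\subseteq I_\a$ is disjoint from $I(\a)=I^\a\setminus I_\a$, whence $I(\a)\subseteq I^\a\setminus A$ and so $V_{I(\a)}=\phi(U_{I(\a)})$ is a subspace of $\phi(U_{I^\a\setminus A})=V_{I^\a\setminus A}=V_{I(\a)\cup B}$ with $B=I_\a\setminus A$. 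In each homological degree the map is the direct sum, over all subsets $A\subseteq I_\a$ of the relevant size, of the inclusions $V_{I(\a)}\hookrightarrow V_{I^\a\setminus A}$, hence injective.

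Next I would show the image is a subcomplex. Each component of the differential of $V(\a,\phi,\om)_\bullet$ is a signed inclusion $V_{I^\a\setminus A}\hookrightarrow V_{I^\a\setminus(A\setminus\{c\})}$ with $c\in A$; restricting it to $V_{I(\a)}\subseteq V_{I^\a\setminus A}$ again lands in $V_{I(\a)}\subseteq V_{I^\a\setminus(A\setminus\{c\})}$, so the differential of $V(\a,\phi,\om)_\bullet$ carries the image of $V_{I(\a)}\otimes C'$ into itself. I would then verify that the differential thereby induced on $V_{I(\a)}\otimes C'$ is $\id_{V_{I(\a)}}\otimes\partial_{C'}$: under the identification it sends $v\otimes\sigma_A^*$ to $\sum_{c\in A}\pm\,v\otimes\sigma_{A\setminus\{c\}}^*$, and the signs agree with those in $\partial_{C'}$ because the sign that Definition~\ref{D:spaces-sequence} attaches to the passage from $A$ to $A\setminus\{c\}$ is, by construction, the incidence sign of the coboundary of the simplex on $I_\a$ oriented by $\om$, which is exactly the sign in $\partial_{C'}=\partial_{(\widetilde C)^*}$ (recall $C'$ is the shifted dual of that reduced chain complex).

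For the cokernel, since cokernels commute with finite direct sums, the cokernel in homological degree $i$ is $\bigoplus_{A\subseteq I_\a,\,|A|=i}V_{I^\a\setminus A}/V_{I(\a)}$. Writing $B=I_\a\setminus A$ one has $I^\a\setminus A=I(\a)\cup B$, so by the definition of $\ol V_B$ in Remarks~\ref{R:minors-remarks} the summand equals $V_{I(\a)\cup B}/V_{I(\a)}=\ol V_B$, and the degree-$i$ cokernel is $\bigoplus_{B\subseteq I_\a,\,|B|=|I_\a|-i}\ol V_B$, which is precisely the degree-$i$ term of \altref{complex-bar-V}. Finally I would check that the induced differential on the cokernel is componentwise the inclusion $\ol V_B\hookrightarrow\ol V_{B\cup\{c\}}$ ($c\in I_\a\setminus B$) taken with the sign of the permutation sorting $(c,B)$, which is exactly the differential of $V(\ol{\phi_{I_\a}},\om)_\bullet$ given by the recipe of Section~3 and displayed in \altref{complex-bar-V}. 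This yields the short exact sequence \altref{E:spaces-sequence-3}.

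Everything above is formal bookkeeping except the sign comparisons in the last two paragraphs, which is where I expect the only real care to be needed. Concretely one must confirm that the ``sort $(c,I_\a\setminus A)$'' signs of Definition~\ref{D:spaces-sequence} restrict, on the sub, to the boundary signs of the $\om$-oriented simplex on $I_\a$ (equivalently of $\partial_{C'}$) and descend, on the quotient, to the ``sort $(c,B)$'' signs of the differential of $V(\ol{\phi_{I_\a}},\om)_\bullet$. This is the standard simplicial-incidence identity; fixing the orientation in the normalized way used in the proof of Lemma~\ref{homology-ommitted-complex} makes the three sign conventions coincide verbatim, so no genuine difficulty remains.
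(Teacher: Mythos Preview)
Your proof is correct and is exactly the verification the paper suppresses: the paper gives no argument for this proposition beyond the sentence ``With this identification it is easy to see that the following holds.'' Your degree-by-degree check that $v\otimes\sigma_A^*\mapsto v\in V_{I(\alpha)}\subseteq V_{I^\alpha\setminus A}$ is an injective chain map with cokernel $V(\ol{\phi_{I_\alpha}},\omega)_\bullet$ is precisely what that phrase abbreviates, so your approach and the paper's coincide.
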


We note that  $V_{I(\alpha)}\otimes C'$ is an exact complex.
%of $V(\alpha)_\bullet$.
Thus taking the long exact sequence on the homology of
\altref{E:spaces-sequence-3} yields

\begin{lemma}\mlabel{homology-bar} %\label{bar-V}
$ \HH_i(V(\alpha,\phi,\omega)_\bullet)=
\HH_i(V(\ol{\phi_{I_\alpha}}, \omega)_\bullet)$.
\end{lemma}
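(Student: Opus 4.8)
The plan is to use the short exact sequence of complexes in \altref{E:spaces-sequence-3} together with the long exact homology sequence, reducing everything to the claim that $V_{I(\alpha)}\otimes C'$ is an exact complex. First I would verify the claim that $V_{I(\alpha)}\otimes C'$ is exact. Here $\widetilde C$ is the augmented (reduced) chain complex of the full simplex on the vertex set $I_\alpha$, which is exact since the simplex is contractible (its reduced homology vanishes in all degrees). Dualizing preserves exactness over a field, and shifting in homological degree does not affect acyclicity, so $C'=(\widetilde C)^*\lal -|I_\alpha|+1\rl$ is exact as well. Finally, tensoring an exact complex of $\Bbbk$-vector spaces with the fixed vector space $V_{I(\alpha)}$ is exact because $-\otimes_\Bbbk V_{I(\alpha)}$ is an exact functor (every vector space is flat). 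Hence $V_{I(\alpha)}\otimes C'$ has vanishing homology in every degree.

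Next I would feed \altref{E:spaces-sequence-3} into the long exact sequence in homology. For each $i$ we get the exact fragment
\[
\HH_i(V_{I(\alpha)}\otimes C')\ra \HH_i(V(\alpha,\phi,\omega)_\bullet)\ra \HH_i(V(\ol{\phi_{I_\alpha}},\omega)_\bullet)\ra \HH_{i-1}(V_{I(\alpha)}\otimes C').
\]
By the first step the two outer terms vanish, so the middle map $\HH_i(V(\alpha,\phi,\omega)_\bullet)\ra \HH_i(V(\ol{\phi_{I_\alpha}},\omega)_\bullet)$ is an isomorphism for every $i$. This is exactly the assertion of the lemma. I would also note that the connecting homomorphism is induced by the inclusion-type maps described before the proposition, so the isomorphism is natural, though naturality is not needed for the statement.

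The only genuine content is the exactness of $V_{I(\alpha)}\otimes C'$, and I expect that to be the main (and essentially only) obstacle — and it is a mild one, amounting to the observation that the reduced chain complex of a simplex is acyclic and that tensoring with a vector space and shifting/dualizing preserve acyclicity. Everything else is a formal consequence of the long exact sequence attached to \altref{E:spaces-sequence-3}, which has already been established in the preceding proposition. So the proof is short: establish the acyclicity claim, invoke the long exact sequence, and read off the isomorphism in each homological degree.
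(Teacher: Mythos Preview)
Your proposal is correct and follows exactly the paper's own argument: the paper simply notes that $V_{I(\alpha)}\otimes C'$ is exact and then invokes the long exact sequence of \altref{E:spaces-sequence-3}. You have merely supplied the (routine) justification for that exactness, so there is nothing to add.
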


Combining Theorem~\ref{homology-beta} and Lemma~\ref{homology-bar}
we obtain the following

\begin{corollary}\mlabel{beta}
Let $\om$ be an ordering on $I_\a$. Then for all $i$ we have
\[
\dim_\Bbbk\HH_{i}(V(\a,\phi,\omega)_\bullet)=
\begin{cases}
\beta(\bfM_\a)  &\text{ if } \ i = |I_\a|-r(\bfM_\a)\\
0               &\text{ otherwise}.
\end{cases}
\]
\end{corollary}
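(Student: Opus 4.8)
The plan is to deduce Corollary~\ref{beta} directly from Theorem~\ref{homology-beta} applied to the representation $\ol{\phi_{I_\alpha}}$, together with the homology identification in Lemma~\ref{homology-bar}. The only thing to check is that the numerical data in Theorem~\ref{homology-beta} specializes correctly, namely that the matroid represented by $\ol{\phi_{I_\alpha}}$ on the ground set $I_\alpha$ is exactly $\bfM_\alpha$, so that $r(\bfM)$ and $|S|$ in the statement of Theorem~\ref{homology-beta} become $r(\bfM_\alpha)$ and $|I_\alpha|$ respectively.

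First I would invoke Lemma~\ref{homology-bar} to replace $\HH_i(V(\alpha,\phi,\omega)_\bullet)$ by $\HH_i(V(\ol{\phi_{I_\alpha}},\omega)_\bullet)$ for every $i$. Next I would note, as recorded in Remarks~\ref{R:minors-remarks}, that $\ol{\phi_{I_\alpha}}\colon U_{I_\alpha}\to W/V_{I(\alpha)}$ is a representation over $\Bbbk$ of the matroid $\bfM_\alpha$, and that the complex $V(\ol{\phi_{I_\alpha}},\omega)_\bullet$ of \altref{complex-bar-V} is precisely the complex $V(\phi',\omega)_\bullet$ of \altref{new-complex} built from the representation $\phi'=\ol{\phi_{I_\alpha}}$ and the ground set $I_\alpha$ (its $i$th term is $\bigoplus_{B\subset I_\alpha,\,|B|=|I_\alpha|-i}\ol V_B$, with the same signed-inclusion differentials). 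Then Theorem~\ref{homology-beta}, applied with $S$ replaced by $I_\alpha$, $\bfM$ replaced by $\bfM_\alpha$, and $\phi$ replaced by $\ol{\phi_{I_\alpha}}$, gives
\[
\dim_\Bbbk\HH_i\bigl(V(\ol{\phi_{I_\alpha}},\omega)_\bullet\bigr)=
\begin{cases}
\beta(\bfM_\alpha) &\text{if } i=|I_\alpha|-r(\bfM_\alpha),\\
0 &\text{otherwise},
\end{cases}
\]
which is the asserted formula. Since Lemma~\ref{homology-bar} and Theorem~\ref{homology-beta} hold for all $i$ without any non-loop hypothesis (Theorem~\ref{homology-beta} already handles the all-loops degenerate case via $\beta(\bfM)=0$ and the vanishing of the complex), there is no case analysis to add here.

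I do not expect a real obstacle: the corollary is essentially a bookkeeping consequence of the two cited results. The only point that deserves a sentence of care is the verification that the indexing shift is consistent — that the top term $\ol V_{I_\alpha}$ of \altref{complex-bar-V} sits in homological degree $0$, matching the convention $V(\phi,\omega)_0=V_S$ in \altref{new-complex} — and that the contracted matroid $\bfM_\alpha$ indeed has ground set $I_\alpha$ (not $I^\alpha$), so that $|S|\mapsto |I_\alpha|$; both are immediate from Definition~\ref{N:matroid-notation} and Remarks~\ref{R:minors-remarks}. Hence the proof is a one-paragraph combination, and I would write it as: apply Lemma~\ref{homology-bar}, identify the complex and the matroid via Remarks~\ref{R:minors-remarks}, and quote Theorem~\ref{homology-beta}.

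\begin{proof}
By Lemma~\ref{homology-bar}, $\HH_i(V(\a,\phi,\omega)_\bullet)\cong\HH_i(V(\ol{\phi_{I_\alpha}},\omega)_\bullet)$ for all $i$. By Remarks~\ref{R:minors-remarks}, the map $\ol{\phi_{I_\alpha}}\colon U_{I_\alpha}\to W/V_{I(\alpha)}$ is a representation over $\Bbbk$ of the matroid $\bfM_\alpha$, and the complex \altref{complex-bar-V} is exactly the complex \altref{new-complex} associated with the ground set $I_\alpha$, the matroid $\bfM_\alpha$, and the representation $\ol{\phi_{I_\alpha}}$ (with the ordering $\omega$). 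Applying Theorem~\ref{homology-beta} with $S$, $\bfM$, $\phi$ replaced by $I_\alpha$, $\bfM_\alpha$, $\ol{\phi_{I_\alpha}}$ respectively, we obtain that $\dim_\Bbbk\HH_i(V(\ol{\phi_{I_\alpha}},\omega)_\bullet)$ equals $\beta(\bfM_\alpha)$ when $i=|I_\alpha|-r(\bfM_\alpha)$ and is $0$ otherwise. Combining the two displayed identifications yields the claim.
\end{proof}
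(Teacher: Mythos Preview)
Your proof is correct and follows exactly the paper's approach: the corollary is stated as an immediate combination of Theorem~\ref{homology-beta} and Lemma~\ref{homology-bar}, and you have simply made explicit the identification (already recorded in Remarks~\ref{R:minors-remarks}) that $\ol{\phi_{I_\alpha}}$ represents $\bfM_\alpha$ on the ground set $I_\alpha$, so that the parameters $|S|$ and $r(\bfM)$ specialize to $|I_\alpha|$ and $r(\bfM_\alpha)$.
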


We finish this section with an example to demonstrate the above.

\begin{example}\mlabel{E:minors-example}
Let $R=\mathbb{Q}[x,y,z]$, $E\cong R^4$ a multigraded free module
with basis $S=\{a,b,c,d\}$ where  $\deg a=(3,1,1)$, $\deg
b=(1,3,1)$, $\deg c=(1,1,3)$, $\deg d=(1,2,2)$, $G\cong R^2$ and
$L$ multigraded with minimal multigraded free presentation
$$ E
\overset{\Phi}\lra G \lra L \lra 0,
$$
and such that the matrix of $\phi(\Phi)$ according to the bases
$\{ e\otimes 1\mid e\in S\}$ and the canonical basis of $G$ is
given by the matrix
\[
\begin{bmatrix}
1&1&1&1\\
1&1&2&3
\end{bmatrix}.
\]
%\newline
First we examine the case where $\a=(3,3,3)$. Here
$I^\a=\{a,b,c,d\}$, $I_\a=\{a,b,c\}$ and $I(\a)=\{d\}$. Thus
$\beta(\bfM_\a)= 1$, $r(M_a)=1$  and for any ordering $\om$ on
$I_\a$ the homology of the complex
\[
V(\a,\phi,\om):\quad 0\ra \mathbb{Q}\lra \mathbb{Q}^6\lra
\mathbb{Q}^6\lra \mathbb{Q}^2\ra 0
\]
is nonzero precisely for $i=2$.

When $\a=(3,2,3)$, $\deg^{-1}(\a)$ equals the point $\{a,c,d\}$,
while $r(M_a)=2$ and $\beta(M_\a)=1$. For any ordering $\om$ on
$I_\a$ the homology of the complex
\[
V(\a,\phi,\om):\quad 0\ra \mathbb{Q}^3\lra \mathbb{Q}^6\lra
\mathbb{Q}^2\ra 0
\]
is nonzero precisely for $i=1$.

\end{example}

\section{The Betti numbers of  $L$ }

Let  $T_\bullet(\Phi,S)$ be the multigraded free resolution of
$L$, see Section \ref{mult-res} and let $\a\in \Lambda(L)$. We
examine the $\a$-graded piece of $T_n(\Phi,S)$ for $n\ge 2$ in
order to compute $\beta_{i,\alpha}(L)$. We have that
\[
T_n(\Phi,S)_\a=\bigoplus_{J, \deg J+\b=\a} (T_J\otimes R)[-\deg
J]_\b
\]
where the index $J\subset S$ runs through all T-flats of $\bfM$
such that $r(J)=|J|-n+1$. It follows that $\deg J\le \a$ and that
$J\subset I^\a$. Let $m$ be the maximal multigraded ideal of $R$.
It is clear that
\[
mT_i(\Phi,S) \cap T_i(\Phi,S)_\alpha =
\bigoplus_{\begin{smallmatrix}
                    J ,\ \deg
J+\b=\a \\ \deg J< \a
                     \end{smallmatrix}} (T_J\otimes R)[-\deg J]_\b
                     \ . \]
When $\a$ is a generic element the condition $\deg J<\a $ is
equivalent to the existence of  an element $b\in I_\alpha$ such
that $J\subset I^\alpha\setminus \{b\}$.

Let $A\subset S$. We let
\[ \bfT_\bullet(\phi_A)=T_\bullet(\Phi_A,A)^{\ge 1}\langle
1\rangle\otimes_R\Bbbk \ .\] The complex $\bfT_\bullet(\phi_A)$
 was introduced in \cite[Definition 2.4.1]{T} where it was shown that
 \[ \bfT_\bullet(\phi_A)\ra V_A\ra 0\]
 is exact.  We  note that $\bfT_0(\phi_A)= U_A$. We will need the
 following important property, see \cite[Theorem~3.2(b) and
 Theorem~3.5]{T}: if $A\subset B$ then $\bfT_\bullet(\phi_A)$ is
canonically a subcomplex of
 $\bfT_\bullet(\phi_B)$. The following Lemma is
a straightforward consequence of the basic structure of these complexes.
% implies the following lemma:

\begin{lemma}\label{T-complexes} Let $\om$ be an ordering in $Y\subset S$ and $X_i$ a
        collection of subsets of $Y$.  There is a chain
        map $p: \bigoplus \bfT(\phi_{X_i})\rightarrow \bfT(\phi_Y)$ where
        $p|_{\bfT(\phi_{X_i})}$ equals the canonical inclusion map
        times the sign determined by $\om$ to order the elements
        of
        $(Y\setminus X_i,Y) $.
\end{lemma}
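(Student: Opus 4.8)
The plan is to unwind the definitions in the statement and exhibit the chain map $p$ directly, componentwise, checking that the sign conventions make it commute with the differentials. Recall that for $A\subset B$ the inclusion $\bfT_\bullet(\phi_A)\hookrightarrow\bfT_\bullet(\phi_B)$ from \cite[Theorem~3.2(b) and Theorem~3.5]{T} is a fixed canonical map of complexes; I will write $\iota_{X_i}$ for the composite $\bfT_\bullet(\phi_{X_i})\hookrightarrow\bfT_\bullet(\phi_Y)$ when $X_i\subset Y$. For each $i$ let $\epsilon_i=\sgn(Y\setminus X_i, Y)$ be the sign of the permutation that, after listing $Y\setminus X_i$ in $\om$-increasing order followed by $Y$ in $\om$-increasing order, sorts the resulting sequence into $\om$-increasing order (with repetitions collapsed). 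Then I define $p=\bigl(p_n\bigr)_n$ with $p_n|_{\bfT_n(\phi_{X_i})}=\epsilon_i\,\iota_{X_i}$ on each summand, extended additively to the direct sum. Since each $\iota_{X_i}$ is a chain map and the scalars $\epsilon_i$ are fixed (independent of homological degree), $p$ automatically commutes with the differentials: on the summand indexed by $X_i$ we get $d_Y\circ(\epsilon_i\iota_{X_i})=\epsilon_i(d_Y\circ\iota_{X_i})=\epsilon_i(\iota_{X_i}\circ d_{X_i})=(\epsilon_i\iota_{X_i})\circ d_{X_i}$, and these patch over the direct sum. Hence $p$ is a chain map with the required restriction property.

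The only genuine content beyond bookkeeping is verifying that the canonical inclusions $\bfT_\bullet(\phi_A)\hookrightarrow\bfT_\bullet(\phi_B)$ are well behaved enough that this works — in particular, that they are honest maps of complexes (not just maps in each degree) and that they are defined on all of $\bfT_\bullet=T_\bullet(\Phi_\bullet,\bullet)^{\ge 1}\langle 1\rangle\otimes_R\Bbbk$, including the degree-$0$ part $\bfT_0(\phi_A)=U_A$, compatibly with the degree-$1$ part. This is exactly what \cite[Theorems~3.2, 3.5]{T} provide: the resolution $T_\bullet(\Phi_A,A)$ sits inside $T_\bullet(\Phi_B,B)$ as a subcomplex via the obvious inclusion $U_A\hookrightarrow U_B$ in degree $0$, the identity-type inclusion on the presentation in degree $1$, and the $T_I$-summand inclusion ($I$ a T-flat of $\bfM|A$, hence of $\bfM|B$) in higher degrees; passing to $(-)^{\ge 1}\langle 1\rangle\otimes_R\Bbbk$ preserves subcomplex inclusions. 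So the lemma reduces to observing this inclusion is natural in $A$ and composing with the fixed sign $\epsilon_i$.

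The main obstacle — really the only thing to be careful about — is the sign normalization: one must confirm that the prescription ``times the sign determined by $\om$ to order $(Y\setminus X_i, Y)$'' is the correct one to make $p$ compatible with the differential signs already built into $\bfT_\bullet$ and, downstream, with the alternating signs in the complexes $V(\a,\phi,\om)_\bullet$ and $V(\ol{\phi_{I_\a}},\om)_\bullet$ of Section~4, where this lemma will be applied (with $Y=I^\a$ and the $X_i=I^\a\setminus A$ ranging over $A\subset I_\a$). Since the differentials of $\bfT_\bullet(\phi_{X_i})$ and $\bfT_\bullet(\phi_Y)$ both arise by restricting/corestricting the single differential of the ambient $T_\bullet$, no extra sign is introduced by $\iota_{X_i}$ itself; the scalar $\epsilon_i$ is a constant per summand and therefore commutes past the differential, so the chain-map identity holds on the nose. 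I would record this verification in one or two lines and refer the reader to the structural results of \cite{T} for the existence and functoriality of the canonical inclusions.
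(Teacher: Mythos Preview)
Your proposal is correct and follows exactly the approach the paper intends: the paper gives no proof beyond remarking that the lemma ``is a straightforward consequence of the basic structure of these complexes,'' having just recalled from \cite[Theorems~3.2(b), 3.5]{T} that $\bfT_\bullet(\phi_A)$ is canonically a subcomplex of $\bfT_\bullet(\phi_B)$ for $A\subset B$. Your write-up simply unpacks this remark---defining $p$ summandwise as the canonical inclusion times a constant sign and noting that a constant scalar times a chain map is again a chain map---which is precisely what the paper leaves to the reader.
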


%\noindent
Let $\a\in \Lambda(L)$ be a generic element. Fix an
ordering $\omega$ in $I^\a$ and consider the chain map
\[
p: \bigoplus_{b\in I_\alpha}\bfT(\phi_{I^\alpha\setminus \{b\}})
 \ra \bfT(\phi_{I^\alpha})
\]
as in Lemma~\ref{T-complexes}. We introduce a new complex:

\begin{definition} Let $C(\alpha)_\bullet$ be the following
complex of vector spaces:
\[
C(\alpha)_\bullet \ = \ \bfT(\phi_{I^\alpha}) \ \bigg/ \
          p( \bigoplus_{b\in I_\alpha}
     \bfT(\phi_{I^\alpha\setminus \{b\}}).
\]
\end{definition}
We note that for $i\ge 1$ we have
$$
mT_i(\Phi,S) \cap T_i(\Phi,S)_\alpha=
p\bigl(\bigoplus_{b\in I_\alpha}\bfT(\phi_{I^\alpha\setminus \{b\}})\bigr)_{i-1}
$$
and thus the following
lemma holds:

\begin{lemma}\mlabel{first-quotient}
Let $\alpha\in\Lambda$ be a generic element. Then for $i\ge 1$
\[\beta_{i,\alpha}(L)=
\dim_\Bbbk\HH_{i-1}\bigl(C(\alpha)_\bullet\bigr).\]
\end{lemma}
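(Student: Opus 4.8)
The plan is to compute the multigraded Betti numbers via the formula $\beta_{i,\alpha}(L)=\dim_\Bbbk \HH_i\bigl(T(\Phi,S)\otimes\Bbbk\bigr)_\alpha$ from Section~\ref{mult-res}, by restricting the tensored resolution $T_\bullet(\Phi,S)\otimes_R\Bbbk$ to the single multidegree $\alpha$. For $i\ge 1$ the free modules $T_i(\Phi,S)$ decompose as a direct sum over T-flats $J$ with $r(J)=|J|-i+1$, and the $\alpha$-graded piece of $T_i(\Phi,S)$ is spanned by those summands with $\deg J\le \alpha$, i.e. with $J\subset I^\alpha$; moreover the submodule $mT_i(\Phi,S)\cap T_i(\Phi,S)_\alpha$ consists of exactly those summands with $\deg J<\alpha$. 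The first step is to identify, using the genericity of $\alpha$, the $\alpha$-graded piece of $T_\bullet(\Phi,S)$ itself. The subcomplex of $T_\bullet(\Phi,S)$ supported on T-flats $J\subset I^\alpha$ is, after tensoring with $\Bbbk$ and a homological shift, precisely $\bfT_\bullet(\phi_{I^\alpha})$ from \cite[Definition 2.4.1]{T}; this is because $T_\bullet(\Phi_{I^\alpha},I^\alpha)$ is built from the very same T-flats of the restricted matroid $\bfM|I^\alpha$, and these are the T-flats of $\bfM$ contained in $I^\alpha$.

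Next I would observe that the subcomplex $mT_\bullet(\Phi,S)\cap T_\bullet(\Phi,S)_\alpha$ corresponds, under the same identifications, to the summands indexed by T-flats $J$ with $\deg J<\alpha$. Here genericity of $\alpha$ is essential: as noted in the text just before Lemma~\ref{T-complexes}, $\deg J<\alpha$ is equivalent to the existence of $b\in I_\alpha$ with $J\subset I^\alpha\setminus\{b\}$. Thus the summands with $\deg J<\alpha$ are exactly the union (over $b\in I_\alpha$) of the T-flats of $\bfM|(I^\alpha\setminus\{b\})$, which is precisely the image of the chain map $p\colon \bigoplus_{b\in I_\alpha}\bfT(\phi_{I^\alpha\setminus\{b\}})\to\bfT(\phi_{I^\alpha})$ of Lemma~\ref{T-complexes}, shifted by one homological degree. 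Combining these two identifications gives, for $i\ge 1$,
\[
\bigl(T_\bullet(\Phi,S)\otimes_R\Bbbk\bigr)_\alpha \big/ \bigl(mT_\bullet(\Phi,S)\cap T_\bullet(\Phi,S)_\alpha\bigr) \ \cong\ \bfT(\phi_{I^\alpha})\big/ p\Bigl(\bigoplus_{b\in I_\alpha}\bfT(\phi_{I^\alpha\setminus\{b\}})\Bigr)\ =\ C(\alpha)_\bullet,
\]
with the homological indexing shifted by one; the quoted identity $mT_i(\Phi,S)\cap T_i(\Phi,S)_\alpha = p(\bigoplus_b \bfT(\phi_{I^\alpha\setminus\{b\}}))_{i-1}$ records exactly this shift.

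Finally, since tensoring a free resolution with $\Bbbk$ kills all differentials landing in $mT_\bullet$, the complex $T_\bullet(\Phi,S)\otimes_R\Bbbk$ in a fixed multidegree $\alpha$ splits as a direct sum of its "minimal" part (with zero differential, computing the Betti numbers) and an acyclic complementary part; concretely, passing to the quotient by $mT_\bullet(\Phi,S)\cap T_\bullet(\Phi,S)_\alpha$ computes the same homology in degrees $i\ge 1$. Hence $\beta_{i,\alpha}(L)=\dim_\Bbbk\HH_i\bigl((T_\bullet\otimes\Bbbk)_\alpha\bigr)=\dim_\Bbbk\HH_{i-1}(C(\alpha)_\bullet)$ for $i\ge 1$. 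The main obstacle I anticipate is not any single hard computation but rather bookkeeping the homological shift carefully and justifying cleanly that the $\alpha$-graded piece of $T_\bullet(\Phi,S)$ is genuinely $\bfT_\bullet(\phi_{I^\alpha})$ (using that every T-flat of $\bfM$ contained in $I^\alpha$ is a T-flat of the restriction and conversely, together with the subcomplex property from \cite[Theorems 3.2(b) and 3.5]{T}); once that correspondence is pinned down, the identification of the submodule $mT_\bullet\cap(T_\bullet)_\alpha$ with $\im p$ via genericity and the statement of the lemma follow formally.
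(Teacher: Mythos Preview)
Your overall approach is the paper's: identify $(T_i(\Phi,S)\otimes_R\Bbbk)_\alpha$ with $C_{i-1}(\alpha)$ via the structural decomposition of $T_\bullet$ and the genericity of $\alpha$, then read off Betti numbers. The identification of the summands with $\deg J<\alpha$ as exactly the image of $p$ is correct and is the heart of the argument.

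There is, however, a notational slip that makes your last paragraph unnecessary. In your displayed formula you write
\[
\bigl(T_\bullet(\Phi,S)\otimes_R\Bbbk\bigr)_\alpha \big/ \bigl(mT_\bullet(\Phi,S)\cap T_\bullet(\Phi,S)_\alpha\bigr),
\]
but $(T_\bullet\otimes_R\Bbbk)_\alpha$ already \emph{equals} $T_{\bullet,\alpha}/(mT_\bullet\cap T_{\bullet,\alpha})$; there is no further quotient to take. Consequently the discussion of a splitting into a ``minimal'' part and an acyclic complement is a detour: once you know $(T_i\otimes_R\Bbbk)_\alpha=C_{i-1}(\alpha)$ as vector spaces with matching differentials, the homology identification is immediate. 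The paper simply records
\[
(T_i(\Phi,S)\otimes_R\Bbbk)_\alpha = T_i(\Phi,S)_\alpha/\bigl(mT_i(\Phi,S)\cap T_i(\Phi,S)_\alpha\bigr) = C_{i-1}(\alpha)
\]
and then invokes minimality of $\Phi$, which is needed precisely so that the induced map $(T_1\otimes\Bbbk)_\alpha\to(T_0\otimes\Bbbk)_\alpha$ vanishes, matching the fact that $C(\alpha)_\bullet$ lives in nonnegative homological degrees. You gesture at minimality but do not isolate this boundary role; with that clarification and the redundancy removed, your argument and the paper's coincide.
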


\begin{proof}
Since $\beta_{i,\alpha}(L)=H_i(T(\Phi,S)\otimes_R \Bbbk)_\alpha, $
we combine the above remarks to get
\[
(T_i(\Phi,S)\otimes_R \Bbbk)_\alpha=T_i(\Phi,S)_\alpha/
mT_i(\Phi,S) \cap T_i(\Phi,S)_\alpha = C_{i-1}(\alpha).
\]
In view of the minimality of the
presentation $\Phi$, the Lemma is now immediate.
\end{proof}

We will reduce the study of the homology of $C(\alpha)_\bullet$ to
the study of a certain double complex. We will need the following
lemma:

\begin{lemma}\mlabel{T:bexactness} Let $X\subset
Y\subset S$, and $\omega$ be a linear ordering $Y$. Consider the
sequence
\begin{equation}\elabel{E:components-sequence-2}
\bfT(X,Y,\omega): 0\ra \bfT(\phi_X) \lra \bigoplus_{b\in
Y\setminus X}   \bfT(\phi_{X\cup\{b\}})\lra \bigoplus_{b,c\in
Y\setminus X} \bfT(\phi_{X\cup\{b,c\}})
\end{equation}
\[
\lra \dots\lra \bigoplus_{c \in Y\setminus X}\bfT
(\phi_{Y\setminus c})\ra
                          \bfT (\phi_Y) \ra 0
\]
where the morphism component $\bfT(\phi_{X\cup C})\lra
\bfT(\phi_{X\cup B})$ is \ $0$ \ if $C\not\subset B$ and otherwise is
the canonical inclusion times the sign determined by $\om$. Then
$\bfT(X,Y,\omega)$ is an acyclic complex.
\end{lemma}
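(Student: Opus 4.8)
The plan is to prove acyclicity of $\bfT(X,Y,\omega)$ by an inductive "mapping cone" decomposition on the size of $Y\setminus X$, exploiting the fact — recalled just before the statement, and established in \cite[Theorem 3.2(b), Theorem 3.5]{T} — that for $A\subset B$ the complex $\bfT(\phi_A)$ sits inside $\bfT(\phi_B)$ as a subcomplex, compatibly with the orderings. Fix some $a\in Y\setminus X$. Each term $\bigoplus_{|C|=k,\,C\subset Y\setminus X}\bfT(\phi_{X\cup C})$ splits as the sum of those $C$ containing $a$ and those not containing $a$; writing $Y'=Y\setminus\{a\}$, the subsets $C\not\ni a$ index the analogous complex $\bfT(X,Y',\omega|_{Y'})$, while the subsets $C\ni a$, via $C\mapsto C\setminus\{a\}$, index (a shift of) $\bfT(X\cup\{a\},Y,\omega)$. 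The sign conventions in the statement are exactly those that make the differential respect this splitting, so that $\bfT(X,Y,\omega)$ is — up to the usual shift and sign twist — the mapping cone of a map of complexes $\bfT(X,Y',\omega)\lra \bfT(X\cup\{a\},Y,\omega)$.

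The key point to verify is that this map is induced componentwise by the canonical inclusions $\bfT(\phi_{X\cup C})\hookrightarrow\bfT(\phi_{X\cup C\cup\{a\}})$ (with the appropriate signs), hence is a genuine morphism of complexes; this is where Lemma~\ref{T-complexes} and the subcomplex property of the $\bfT$'s are used. Granting that, a mapping cone is acyclic if and only if the map is a quasi-isomorphism. So the induction reduces to showing that the inclusion-induced map $\bfT(X,Y',\omega)\lra \bfT(X\cup\{a\},Y,\omega)$ is a quasi-isomorphism. Here one uses the exactness statement $\bfT_\bullet(\phi_A)\ra V_A\ra 0$ (from \cite[Definition 2.4.1]{T}): both $\bfT(X,Y',\omega)$ and $\bfT(X\cup\{a\},Y,\omega)$ can be compared, termwise via this augmentation, to the corresponding complexes built out of the vector spaces $V_{X\cup C}$ — which, for fixed $Y$, are the complexes $V(\phi_{?},\omega)_\bullet$-type inclusion complexes of Section~3 — and a spectral-sequence / diagram-chase argument then shows both $\bfT$-complexes have homology concentrated in the same degree and that the map is an isomorphism there. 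Alternatively, and more cleanly, one iterates the cone decomposition all the way down: unwinding the double induction, $\bfT(X,Y,\omega)$ is filtered with successive quotients built from the single complexes $\bfT(\phi_{X\cup C})$ for $C\subseteq Y\setminus X$, and the associated spectral sequence has $E_1$-page the Koszul-type complex $\bigoplus_C H_*(\bfT(\phi_{X\cup C}))$ with differentials the alternating sums of the canonical inclusions; since each $\bfT(\phi_{X\cup C})$ resolves $V_{X\cup C}$, this $E_1$-page is (a truncation of) the tensor product of an exact Koszul complex on $Y\setminus X$ with the resolution machinery, and collapses.

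The main obstacle I anticipate is bookkeeping: getting the signs in the statement's differential to line up precisely with the Koszul/cone signs so that the splitting $\bigoplus_{C\not\ni a}\oplus\bigoplus_{C\ni a}$ is genuinely a splitting of complexes (not merely of graded vector spaces), and, inside that, making sure the canonical-inclusion maps of the $\bfT$'s are compatible with the orderings $\omega$ restricted to the various subsets — this is exactly the content that Lemma~\ref{T-complexes} packages, so the proof should cite it at each inclusion. Once the cone structure is in place the acyclicity is formal: a mapping cone of a quasi-isomorphism is acyclic, and the base case $Y=X$ is the trivial complex $0\to\bfT(\phi_X)\xrightarrow{\ \id\ }\bfT(\phi_X)\to 0$ (or, if one prefers to induct with the augmentation $\bfT(\phi_X)\to V_X\to 0$ as the base, it is the defining exactness from \cite{T}).
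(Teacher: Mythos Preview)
Your inductive cone decomposition is exactly the one the paper uses: pick $b\in Y\setminus X$, set $X'=X\cup\{b\}$ and $Y'=Y\setminus\{b\}$, and obtain a short exact sequence
\[
0\lra \bfT(X',Y,\omega)\lra \bfT(X,Y,\omega)\lra \bfT(X,Y',\omega)\langle -1\rangle\lra 0,
\]
so by induction both outer terms are acyclic. The gap is in what you do next. In the paper's terminology, \emph{acyclic} means $\HH_i=0$ for $i\neq 0$, not $\HH_i=0$ for all $i$; so ``the cone is acyclic iff the map is a quasi-isomorphism'' is simply false here. Indeed the map $\HH_0(\bfT(X,Y',\omega))\to\HH_0(\bfT(X',Y,\omega))$ is \emph{not} an isomorphism in general: already for $|Y\setminus X|=1$ the complex is $0\to\bfT(\phi_X)\hookrightarrow\bfT(\phi_Y)\to 0$, whose $\HH_0$ is the nonzero quotient $\bfT(\phi_Y)/\bfT(\phi_X)$ (in internal degree $0$ this is $U_Y/U_X\cong\Bbbk$). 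Your stated base case $0\to\bfT(\phi_X)\xrightarrow{\id}\bfT(\phi_X)\to 0$ is also wrong for the same reason: when $Y=X$ the complex is the single term $\bfT(\phi_X)$ in degree $0$, with $\HH_0=\bfT(\phi_X)\neq 0$.

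What is actually needed from the long exact sequence is only the \emph{injectivity} of $\HH_0(\bfT(X,Y',\omega))\to\HH_0(\bfT(X',Y,\omega))$, i.e.\ of the map on cokernels induced by $\bfT(\phi_{Y'})\hookrightarrow\bfT(\phi_Y)$. Neither of your proposed routes supplies this: comparing via the augmentations $\bfT(\phi_A)\to V_A$ replaces the $\bfT$'s by the $V$'s and so computes the homology of the \emph{total} double complex (this is precisely how $V(\alpha,\phi,\omega)_\bullet$ arises later), not the row-by-row acyclicity claimed in the lemma; and your spectral-sequence alternative does the same thing. The paper instead proves the required injectivity directly from the internal structure of the complexes $\bfT(\phi_A)$ from \cite{T}, namely the identity
\[
\bfT_i(\phi_{Y'})\ \cap\ \sum_{c\in Y\setminus X'}\bfT_i(\phi_{Y\setminus c})
\ =\ \sum_{c\in Y'\setminus X}\bfT_i(\phi_{Y'\setminus c}),
\]
which is exactly the statement that an element of $\bfT(\phi_{Y'})$ lying in the image from degree $1$ of $\bfT(X',Y,\omega)$ already lies in the image from degree $1$ of $\bfT(X,Y',\omega)$. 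You never invoke this (or any substitute), and without it the induction does not close.
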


\begin{proof}
We will do induction on $|Y\setminus X|$.  If $|Y\setminus X|=0$
then $Y=X$ and $\bfT(X,Y,\omega): 0\lra \bfT(\phi_Y)\lra 0$.
Suppose now that $X\neq Y$ and let $b$ be the biggest element of
$Y\setminus X$. We set $X'=X\cup \{b\}$, $Y'=Y\setminus \{b\}$ and
let $\omega'$ be the induced ordering on $Y'$. Then we get the
short exact sequence
\[
0\lra \bfT(X', Y, \omega')\lra \bfT(X,Y,\omega)
 \lra \bfT(X,Y',\omega')\lal-1\rl \lra 0.
\]
Using the induction hypothesis and the long exact sequence in
homology we get that if $i>1$ then $\HH_i( \bfT(X,Y,\omega) )=0$
while if $i=1$ then
\[
0\lra \HH_1( \bfT(X,Y,\omega) ) \lra \HH_0( \bfT(X,Y',\omega') )
 \lra \HH_0( \bfT(X',Y,\omega).
\]
Thus it suffices to show that the map of complexes
\[
\HH_0(\bfT(X,Y',\omega') )\lra \HH_0( \bfT(X',Y,\omega))
\]
induced by the inclusion map $\bfT(\phi_{Y'})\subset \bfT(\phi_Y)$
is injective. This however is immediate since
\[
\bfT_i(\phi_{Y'})\ \bigcap\ \bigoplus_{c\in Y\setminus X}
\bfT_i(\phi_{Y\setminus c})= \bigoplus_{c\in Y'\setminus X}
\bfT_i(\phi_{Y'\setminus c}),
\]
as follows from the structure of these sets, see \cite[Definition
2.4.1]{T}.
\end{proof}

We apply  Lemma~\ref{T:bexactness} to the special case where
$X=I(\alpha)$ and $Y=I^\alpha$. We have that $Y\setminus
X=I_\alpha$.

\begin{lemma}\mlabel{double-complex}
Let $\alpha\in\Lambda$ generic, and let $\omega$ be an ordering on
$I^\alpha$. Then the natural sequence of morphisms of complexes
\begin{equation}\elabel{E:complexes-sequence}
0\ra \bfT(\phi_{I(\alpha)}) \lra \bigoplus_{b\in I_\alpha}
\bfT(\phi_{I(\alpha)\cup\{b\}})
 \lra %\negthickspace \negthickspace \negthickspace
    \cdots
\bigoplus_{\begin{smallmatrix}
                   b\in I_\alpha\\
              \end{smallmatrix}}
\bfT(\phi_{I^\alpha\setminus b}) \lra \bfT(\phi_{I^\alpha})
\ra 0
\end{equation}
is acyclic.
\end{lemma}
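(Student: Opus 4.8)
The plan is to recognize the sequence \altref{E:complexes-sequence} as exactly the complex $\bfT(X,Y,\omega)$ of Lemma~\ref{T:bexactness} in the special case $X=I(\alpha)$, $Y=I^\alpha$ (so $Y\setminus X=I_\alpha$), and then simply invoke that lemma. Indeed, for $X=I(\alpha)$ and $Y=I^\alpha$, a subset $X\cup C$ with $C\subseteq I_\alpha$ of size $j$ contributes $\bfT(\phi_{I(\alpha)\cup C})$ in homological position $j$ of \altref{E:components-sequence-2}, and since $I^\alpha\setminus b = I(\alpha)\cup(I_\alpha\setminus\{b\})$, the penultimate term $\bigoplus_{c\in Y\setminus X}\bfT(\phi_{Y\setminus c})$ of \altref{E:components-sequence-2} becomes $\bigoplus_{b\in I_\alpha}\bfT(\phi_{I^\alpha\setminus b})$, which matches the penultimate term of \altref{E:complexes-sequence}. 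The differentials also match: in both cases the component $\bfT(\phi_{X\cup C})\lra\bfT(\phi_{X\cup B})$ is zero unless $C\subseteq B$, in which case it is the canonical inclusion (available because $X\cup C\subseteq X\cup B$, using \cite[Theorem~3.2(b) and Theorem~3.5]{T}) times the sign determined by $\omega$ on ordering the elements of $(\,(X\cup B)\setminus(X\cup C),\,X\cup B\,)$; since $(X\cup B)\setminus(X\cup C)=B\setminus C$ and the bookkeeping of signs is the same, the two complexes are literally identical.

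The main step is therefore just to make the identification precise and then cite Lemma~\ref{T:bexactness}, which gives acyclicity. The only point that requires a line of care is checking that the indexing sets agree — that the T-flat/inclusion structure of the $\bfT(\phi_{I(\alpha)\cup C})$ for $C$ running over subsets of $I_\alpha$ is correctly captured by \altref{E:components-sequence-2} — but this is immediate from the definition of $\bfT(X,Y,\omega)$ once one observes $I^\alpha = I(\alpha)\sqcup I_\alpha$ (disjoint, since $\deg^{-1}(\alpha)=[I_\alpha,I^\alpha]$ gives $I_\alpha\subseteq I^\alpha$ and $I(\alpha)=I^\alpha\setminus I_\alpha$). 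I expect no real obstacle here; the lemma has been set up precisely so that this corollary falls out by substitution.

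\begin{proof}
This is the complex $\bfT(X,Y,\omega)$ of Lemma~\ref{T:bexactness} for the choice $X=I(\alpha)$ and $Y=I^\alpha$. Indeed, since $\deg^{-1}(\alpha)=[I_\alpha,I^\alpha]$ is a closed interval we have $I_\alpha\subseteq I^\alpha$, and by definition $I(\alpha)=I^\alpha\setminus I_\alpha$, so $Y\setminus X=I_\alpha$ and $I(\alpha)\cup C\subseteq I^\alpha$ for every $C\subseteq I_\alpha$. In homological position $j$ the complex \altref{E:components-sequence-2} has the summand $\bfT(\phi_{I(\alpha)\cup C})$ for each $C\subseteq I_\alpha$ with $|C|=j$; in particular for $j=0$ the term is $\bfT(\phi_{I(\alpha)})$, and since $I^\alpha\setminus b=I(\alpha)\cup(I_\alpha\setminus\{b\})$ the second-to-last term is $\bigoplus_{b\in I_\alpha}\bfT(\phi_{I^\alpha\setminus b})$ and the last term is $\bfT(\phi_{I^\alpha})$. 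The differential component $\bfT(\phi_{I(\alpha)\cup C})\lra\bfT(\phi_{I(\alpha)\cup B})$ is zero unless $C\subseteq B$, in which case $I(\alpha)\cup C\subseteq I(\alpha)\cup B$ and it is the canonical inclusion of \cite[Theorem~3.2(b) and Theorem~3.5]{T} times the sign determined by $\omega$; this is exactly the differential of \altref{E:complexes-sequence}. Hence \altref{E:complexes-sequence} is precisely $\bfT(I(\alpha),I^\alpha,\omega)$, and it is acyclic by Lemma~\ref{T:bexactness}.
\end{proof}
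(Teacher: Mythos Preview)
Your proof is correct and is exactly the paper's approach: the paper also obtains this lemma by specializing Lemma~\ref{T:bexactness} to $X=I(\alpha)$ and $Y=I^\alpha$, noting that then $Y\setminus X=I_\alpha$. Your additional verification that the terms and differentials match is fine but not needed beyond what the paper already observes.
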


We can now prove the main result of this paper:

\begin{theorem}\mlabel{main-thm}
Let $\alpha \in \Lambda$ be generic. There is at most one $i\ge 1$
such that $\beta_{i,\alpha}(L)\ne 0$. More precisely, we have for
each $i\ge 1$
\[
\beta_{i,\alpha}(L)=
\begin{cases}
\beta(\bfM_\alpha)  &\text{ if } i = |I_\alpha|- \rank\bfM_\alpha +1 ; \\
0                  &\text{ otherwise}.
\end{cases}
\]
\end{theorem}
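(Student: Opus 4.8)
The proof assembles the pieces already in place. By Lemma~\ref{first-quotient} we have $\beta_{i,\alpha}(L)=\dim_\Bbbk\HH_{i-1}(C(\alpha)_\bullet)$ for $i\ge 1$, so it suffices to identify the homology of $C(\alpha)_\bullet$. The idea is to exhibit $C(\alpha)_\bullet$ as the homology of one filtration of a double complex built from the acyclic complex of Lemma~\ref{double-complex}, and then to compute the homology via the \emph{other} filtration, which will produce the complex $V(\alpha,\phi,\omega)_\bullet$ of Definition~\ref{D:spaces-sequence}, whose homology is given by Corollary~\ref{beta}.

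First I would set up the double complex. Tensoring the acyclic complex \altref{E:complexes-sequence} of Lemma~\ref{double-complex} is not quite what we want; instead, regard \altref{E:complexes-sequence} as a complex of complexes (in the ``horizontal'' direction indexed by subsets $A\subseteq I_\alpha$, with the $A$th spot being $\bfT(\phi_{I^\alpha\setminus A})$) and form the associated double complex $\mathbf D_{\bullet,\bullet}$, whose total complex I will call $\mathrm{Tot}(\mathbf D)_\bullet$. The horizontal differentials are the signed canonical inclusions from Lemma~\ref{T-complexes}; the vertical differentials come from the internal differentials of the $\bfT$-complexes. Quotienting each $\bfT(\phi_{I^\alpha})$-column by the image of the rest of the row is exactly how $C(\alpha)_\bullet$ was defined, so the first filtration (filter by the horizontal degree, take vertical homology first) will collapse using the acyclicity in Lemma~\ref{double-complex}: the only surviving column after taking homology of the rows is $C(\alpha)_\bullet$ sitting in the appropriate spot, giving a spectral sequence that degenerates and yields $\HH_n(\mathrm{Tot}(\mathbf D))\cong\HH_{n}(C(\alpha)_\bullet)$ up to a shift I will need to track carefully. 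For the second filtration (filter by vertical degree, take horizontal homology first), each row is of the form $\bigoplus_{|A|=\bullet}\bfT_j(\phi_{I^\alpha\setminus A})$ with the signed-inclusion differentials; by the structural description of the $\bfT$-complexes from \cite[Definition 2.4.1]{T} and the combinatorics used in Lemma~\ref{T:bexactness}, taking horizontal homology of row $j$ reduces, for $j\ge 1$, to a contribution only from the ``new'' pieces, while for $j=0$ one gets $\bigoplus_{A\subseteq I_\alpha}U_{I^\alpha\setminus A}$ with inclusion maps. Passing to the images in $V_S$ via the exact augmentation $\bfT_\bullet(\phi_A)\to V_A\to 0$, the $E_1$-page in the bottom row is precisely the complex $V(\alpha,\phi,\omega)_\bullet$ of \altref{E:spaces-sequence}; the higher rows must be shown to contribute nothing, which is where the detailed structure of $T_\bullet$ enters. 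Then Corollary~\ref{beta} gives $\HH_i(V(\alpha,\phi,\omega)_\bullet)=\beta(\bfM_\alpha)$ when $i=|I_\alpha|-\rank\bfM_\alpha$ and $0$ otherwise.

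Combining the two computations of $\HH_\bullet(\mathrm{Tot}(\mathbf D))$ and unwinding the homological-degree shifts, we get $\HH_{i-1}(C(\alpha)_\bullet)=\beta(\bfM_\alpha)$ exactly when $i-1=|I_\alpha|-\rank\bfM_\alpha$, i.e. $i=|I_\alpha|-\rank\bfM_\alpha+1$, and $0$ otherwise; together with Lemma~\ref{first-quotient} this is the claimed formula, and in particular at most one $i\ge1$ has $\beta_{i,\alpha}(L)\ne0$. I would also remark that when $\bfM_\alpha$ has a loop the single possibly-nonzero value is itself $\beta(\bfM_\alpha)=0$, so the statement is consistent with $L$ having no syzygy in degree $\alpha$ in that case.

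\textbf{Main obstacle.} The routine part is the bookkeeping of signs and homological shifts between $C(\alpha)_\bullet$, the total complex, and $V(\alpha,\phi,\omega)_\bullet$. The genuinely hard step is the second-filtration computation: showing that taking horizontal homology of the rows of $\mathbf D_{\bullet,\bullet}$ kills everything except the bottom row and there produces exactly $V(\alpha,\phi,\omega)_\bullet$. This requires a careful analysis of how the $\bfT$-complexes $\bfT(\phi_{I^\alpha\setminus A})$ sit inside $\bfT(\phi_{I^\alpha})$ as $A$ varies over subsets of $I_\alpha$ — essentially an enriched, ``fibered'' version of the acyclicity argument in Lemma~\ref{T:bexactness}, applied one internal degree at a time — and it is the place where the hypothesis that $\alpha$ is a \emph{generic} element (so that $\deg^{-1}(\alpha)$ is the full interval $[I_\alpha,I^\alpha]$, and $\deg J<\alpha$ iff $J\subseteq I^\alpha\setminus\{b\}$ for some $b\in I_\alpha$) is used in an essential way.
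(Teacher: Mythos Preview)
Your approach is exactly the paper's, but you have the roles of the two spectral sequences swapped, and this swap manufactures an obstacle that is not actually there.

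Taking \emph{vertical} homology first (the columns of the double complex are the complexes $\bfT_\bullet(\phi_{I^\alpha\setminus A})$) uses nothing beyond the exact augmentation $\bfT_\bullet(\phi_A)\to V_A\to 0$ recalled just before Lemma~\ref{T-complexes}: the $E_1$-page is concentrated on a single row and is precisely $V(\alpha,\phi,\omega)_\bullet$. Taking \emph{horizontal} homology first (the rows are the degree-$j$ pieces of the sequence \altref{E:complexes-sequence}) uses Lemma~\ref{double-complex}: that sequence of complexes is acyclic, hence acyclic in each internal degree, so $E_1$ is concentrated in a single column and equals the cokernel $C(\alpha)_\bullet$ by its very definition. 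Both spectral sequences therefore degenerate at $E_2$ for the most elementary of reasons, yielding $\HH_i\bigl(C(\alpha)_\bullet\bigr)\cong\HH_i\bigl(V(\alpha,\phi,\omega)_\bullet\bigr)$ at once; combined with Lemma~\ref{first-quotient} and Corollary~\ref{beta} this finishes the proof, exactly as in the paper.

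Your ``main obstacle'' --- showing that horizontal homology kills the higher rows and leaves $V(\alpha,\phi,\omega)_\bullet$ on the bottom --- is a phantom: horizontal homology produces $C(\alpha)_\bullet$, not $V(\alpha,\phi,\omega)_\bullet$, and the acyclicity required is precisely the statement of Lemma~\ref{double-complex}, which already holds degree by degree. No ``enriched, fibered version'' of Lemma~\ref{T:bexactness} is needed. (There is also an internal slip in your description of the first filtration: you write ``take vertical homology first'' and then, in the same sentence, speak of ``taking homology of the rows''; these are opposite operations, and the confusion propagates through the rest of the plan.)
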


\begin{proof}
Let $b\in I_a$ and let $\om$ be an ordering on $I^\a$ so that $b$
is the biggest element of $I_\a$. The two standard spectral
sequences associated with the double complex of Lemma
\ref{double-complex} collapse. According to the first filtration
we get the complex $V(\a,\phi,\om)_\bullet$, see the remarks
preceding Lemma \ref{T-complexes}. According to the second
filtration we get the complex $C(\a)_\bullet$. Thus
$\HH_{i}\bigl(C(\alpha))=\HH_{i}(V(\alpha))$. Combining this with
Lemma~\ref{first-quotient}  and Corollary~\ref{beta} we are done.
\end{proof}

We apply the theorem to monomial ideals. When $J$ is a
monomial ideal and $\a$ is generic we prove that
$\beta_{i,\a}(R/J)\neq 0$ if and only if $\a$ corresponds
to a face of the Scarf complex of $R/J$.

\begin{corollary}\label{monomial-corollary}
Let $J$ be a monomial ideal and let $\alpha\in\Lambda(R/J)$ be a
generic element. If $I_\a\neq I^\a$  then $\forall i\ge 1$,
$\beta_{i,\alpha}(R/J)=0$ . Otherwise
\[
\beta_{i,\alpha}(R/J)=
\begin{cases}
1  &\text{ if } i = |I_\alpha| ; \\
0  &\text{ otherwise}.
\end{cases}
\]
\end{corollary}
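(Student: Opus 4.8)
The plan is to derive Corollary~\ref{monomial-corollary} directly from Theorem~\ref{main-thm} by specializing to the monomial case, where the presentation matrix has a single row, so that $g=\rank_R(G)=1$. In that setting the representation $\phi=\phi(\Phi)\colon U_S\lra W(\Phi)$ maps into the one-dimensional space $W(\Phi)\cong\Bbbk$, and moreover every basis element $s\in S$ corresponds to a monomial generator, so $\phi(e_s)\ne 0$ for all $s$. Consequently the matroid $\bfM=\bfM(\Phi,S)$ is the uniform matroid $U_{1,|S|}$: every singleton is independent, every set of size $\ge 2$ is dependent, $r(J)=1$ for every nonempty $J$ and $r(\emptyset)=0$, and there are no loops. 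The first step is to record these facts and to compute, for a generic $\alpha\in\Lambda(R/J)$, the minor $\bfM_\alpha=\bfM^\alpha/I(\alpha)$: since $\bfM^\alpha=\bfM|I^\alpha$ is again uniform of rank $1$ on $I^\alpha$ (assuming $I^\alpha\ne\emptyset$, which holds as $\alpha\in\Lambda(R/J)$), contracting the flat spanned by $I(\alpha)$ either kills the rank entirely or leaves a uniform rank-$1$ matroid, according to whether $I(\alpha)\ne\emptyset$ or $I(\alpha)=\emptyset$.

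The second step is the case split that mirrors the statement. If $I_\alpha\ne I^\alpha$, then $I(\alpha)=I^\alpha\setminus I_\alpha\ne\emptyset$, so $V_{I(\alpha)}=W(\Phi)$ already has dimension $1$; hence in $\bfM_\alpha=\bfM^\alpha/I(\alpha)$ every element of $I_\alpha$ becomes a loop, and in particular $\rank\bfM_\alpha=0$. By the standard property recalled just before the end of the Matroids subsection (namely $\beta(\bfM)=0$ when $\bfM$ has a loop, \cite[Theorem II]{C}), we get $\beta(\bfM_\alpha)=0$. Plugging this into Theorem~\ref{main-thm} gives $\beta_{i,\alpha}(R/J)=0$ for all $i\ge 1$, which is the first alternative of the corollary. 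If instead $I_\alpha=I^\alpha$, then $I(\alpha)=\emptyset$, so $\bfM_\alpha=\bfM^\alpha=\bfM|I^\alpha$ is the uniform matroid $U_{1,|I_\alpha|}$ (note $I_\alpha\ne\emptyset$). For the uniform matroid $U_{1,n}$ with $n\ge 1$ one computes directly from the defining formula
\[
\beta(U_{1,n})=(-1)^{1}\sum_{J\subseteq[n]}(-1)^{|J|}r(J)
= -\Bigl(r(\emptyset)+\sum_{\emptyset\ne J}(-1)^{|J|}\cdot 1\Bigr)
= -\sum_{\emptyset\ne J\subseteq[n]}(-1)^{|J|}=1,
\]
since $\sum_{J\subseteq[n]}(-1)^{|J|}=0$ for $n\ge 1$ and the empty-set term contributes $r(\emptyset)=0$. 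Thus $\beta(\bfM_\alpha)=1$ and $\rank\bfM_\alpha=1$, so the homological degree in Theorem~\ref{main-thm} is $i=|I_\alpha|-1+1=|I_\alpha|$, and we obtain $\beta_{i,\alpha}(R/J)=1$ exactly when $i=|I_\alpha|$ and $0$ otherwise, which is the second alternative.

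I do not expect any serious obstacle here: the only subtlety is making sure the degenerate boundary cases are handled cleanly, e.g. confirming that a generic $\alpha\in\Lambda(R/J)$ always has $I^\alpha\ne\emptyset$ (true, since $\alpha$ lies in the image of $\deg$, so it is the degree of some nonempty set) and that when $I_\alpha=I^\alpha$ this common set is nonempty (again because it has a positive multidegree $\alpha\ne 0$, as $0\notin\Lambda(R/J)$ for a proper monomial ideal). One should also note, for the interpretive remark preceding the corollary, that $I_\alpha=I^\alpha$ precisely says $\deg^{-1}(\alpha)$ is a single point, i.e. $\alpha$ is the multidegree of a face of the Scarf complex $\Delta(\Phi)$ of $R/J$ (cf. the first bullet of Examples~\ref{E:generic-example}); combined with Remark~\ref{R:LCM-degrees}, which confines all nonzero Betti numbers to multidegrees in $\Lambda(R/J)$, this yields the advertised statement that $\beta_{i,\alpha}(R/J)\ne 0$ for some $i\ge 1$ if and only if $\alpha$ is the degree of a Scarf face, in which case it occurs in homological degree equal to the cardinality of that face and equals $1$.
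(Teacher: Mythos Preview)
Your proof is correct and follows the same route as the paper's: both apply Theorem~\ref{main-thm} directly and compute $\beta(\bfM_\alpha)$ and $\rank\bfM_\alpha$ in the two cases. The paper's argument is terser---it simply asserts that when $I_\alpha\ne I^\alpha$ the minor $\bfM_\alpha$ is the ``empty'' (i.e.\ rank-zero) matroid with $\beta=0$, and otherwise $\beta(\bfM_\alpha)=\rank\bfM_\alpha=1$---whereas you spell out explicitly that $\bfM$ is $U_{1,|S|}$, that contraction by a nonempty $I(\alpha)$ turns every element of $I_\alpha$ into a loop, and you compute $\beta(U_{1,n})=1$ from the defining sum; these are exactly the details the paper leaves implicit.
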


\begin{proof}
If $I_\alpha\neq I^{\alpha}$ then $\bfM_\a$ is the empty matroid
and $\beta(\bfM_\alpha)=0$. Otherwise $\beta(\bfM_\alpha)=\rank\bfM_\alpha=1$.
\end{proof}

We finish this section with an example to show that in the general
case it may be  $I_\a\neq I^\a$ and  $\beta_{i,\alpha}(L)\neq 0$.

\begin{example}
 Let $L$ be the module of Example \ref{E:minors-example}. For
 $\a=(3,3,3)$ we have the following data:
$\beta(\bfM_\alpha)=\rank\bfM_\alpha=1$,
 $|I_\a|=3$. Thus $\beta_{3,\alpha}(L)=1$.
\end{example}

\end{document}